\newtheorem{theorem}{Theorem}[section]
\newtheorem{lemma}[theorem]{Lemma}
\newtheorem{proposition}[theorem]{Proposition}
\newtheorem{corollary}[theorem]{Corollary}
\theoremstyle{definition}
\newtheorem{definition}[theorem]{Definition}
\newtheorem{example}[theorem]{Example}
\newtheorem{remark}[theorem]{Remark}
\numberwithin{equation}{section}
\begin{document}
\setcounter{page}{1}

\title[Spectral properties of $(m,n)$-Isosymmetric  multivariable operators]{Spectral properties of $(m,n)$-Isosymmetric  multivariable operators}

\author[ Sid Ahmed O. A. Mahmoud, A. Bachir, S. Mecheri and A. Segres]{Sid Ahmed Ould Ahmed Mahmoud, Ahmed Bachir, Salah Mecheri and Abdelkader Segres}

\address{ Sid Ahmed Ould Ahmed Mahmoud  \endgraf
  Mathematics Department, College of Science, Jouf
University,\endgraf Sakaka P.O.Box 2014. Saudi Arabia}
\email{sidahmed@ju.edu.sa}
\address{ Ahmed Bachir \endgraf Department of Mathematics, College of Science, King Khalid University, \endgraf Abha, Saudi
		Arabia.}
	\email{abishr@kku.edu.sa, bachir1960@icloud.com}

\address{Salah Mecheri\endgraf
Department of Mathematics\endgraf
Mohamed El Bachir Elibrahimi\endgraf University
Bordj Bou Arreridj  Algeria}
\email{e-mail:mecherisalah@hotmail.com}

	\address{Abdelkader Segres \endgraf Department of Mathematics, University of Mascara, Mascara, Algeria}
	
	\email{segres03@yahoo.fr}

\keywords{: Hilbert space, $m$-isometry, $n$-ymmetry, $(m,n)$-isosymmetric operators}
\subjclass[2010]{ 47A05, 47A10, 47A11}

\begin{abstract}
 Inspired by recent works on $m$-isometric and $n$-symmetric multivariables  operators on Hilbert spaces, in this paper we introduce the class of $(m, n)$-isosymmetric multivariables operators. This new class of operators emerges as a generalization of the $m$-isometric and $n$-isosymmetric multioperators.
 We study this class of operators and give some of their basic properties. In particular, we show that if ${\bf \large R} \in {\mathcal B}^{(d)}({\mathcal H})$ is an $(m,n
)$-isosymmetric multioperators and ${\bf \large Q}\in {\mathcal B}^{(d)}({\mathcal H})$  is an $q$-nilpotent multioperators,
 then ${\bf\large R} +{\bf\large  Q}$ is an $(m + 2q - 2,n+2q-1)$-isosymmetric multioperators  under suitable conditions. Moreover, we give some results about
the joint approximate spectrum of an $(m,n)$-isosymmetric multioperators.
\end{abstract} \maketitle

\section{Introduction and preliminaries}

We set below the notations used throughout this paper.
Let ${\mathcal B}({\mathcal H} )$ be the algebra of bounded linear operators on a separable complex
Hilbert space ${\mathcal  H}$. We use the notations $\mathbb{N}$  the set of natural numbers, $\mathbb{N}_0$ the set of  nonnegative integers,
 $\mathbb{R}$ the set of real numbers
 and $\mathbb{C}$ the set of complex
numbers.
An operator $R \in {\mathcal B}({\mathcal H})$  is said to be $m$-isometric operator if
\begin{equation}\label{eq1.1}\sum_{0\leq k \leq m}(-1)^{m-k}\binom{m}{k}R^{*k}R^{k}=0, \end{equation} for some positive integer $m$,
or
\begin{equation}\label{eq1.2}\sum_{0\leq k \leq m}(-1)^{m-k}\binom{m}{k}\|R^{k}x\|^2=0,\;\quad  \forall\;\; x\in { \mathcal{H}}.\end{equation}
 Such
$m$-isometric operators were introduced by J. Agler
and were studied in great detail by  J. Alger and M. Stankus in the papers \cite{AS1,AS2,AS3}.
For more rich theory on $m$-isometric operators and related classes, we invite the most interesting readers  to consult the references \cite{BJZ,CKL1,DK1,DK2,GU1,OA2,OA3,SO4, MP0, YY,FS}.
\par \vskip 0.2 cm \noindent
The concept of $n$-symmetric operators has been introduced and study in \cite{HIL1,SH}. Let $R$ be an operator on  a  Hilbert space,  $R$ is said  to be an  $n$-symmetric operator if $R$ satisfies
\begin{equation}\label{eq1.3}
\sum_{0\leq j\leq n}(-1)^j\binom{n}{j}R^{*j}R^{n-j}=0,
\end{equation}
for some positive integer $n$. It has proved that a power of $n$-symmetric operator is a again $n$-symmetric and the product of two $n$-symmetric operator is also $n$-symmetric under suitable conditions (see  \cite{SH}).\par \vskip 0.2 cm \noindent
\noindent
 Based on  (\ref{eq1.1}) and  (\ref{eq1.3}) the authors in (\cite{st0, St1}) has been introduced the class of  $(m, n)$-isosymmetric operators. An operator $R\in {\mathcal B}({\mathcal H})$ is said to be   $(m, n)$-isosymmetric operator if
\begin{eqnarray*}
 &&\sum_{0\leq j\leq m}(-1)^j\binom{m}{j}R^{*(m-j)}\bigg(\sum_{0\leq k\leq n}(-1)^k\binom{n}{k}R^{*(n-k)}R^k\bigg)R^{m-j}\\&=&
\sum_{0\leq k\leq n}(-1)^k\binom{n}{k}R^{*(n-k)}\bigg(\sum_{0\leq j\leq m}(-1)^j\binom{m}{j}R^{*(m-j)}R^{m-j}\bigg)R^k\\&&=0.
 \end{eqnarray*}
 The study of multioperators has received great interest
 by many authors during recent years. The investigation of multioperators belonging to some specific classes has been quite fashionable since the beginning
of the century, and sometimes it is indeed relevant.  Some developments about these subjects has been done in \cite{AT,BFS1,BFS2,CS,CBS,GR,HMM,HF,OA1, OA4, GBS, SH} and the references therein.
\par \vskip 0.2 cm \noindent
For $d \in \mathbb{N}$, let ${\bf \large R}=(R_1,\cdots,R_d) \in {\mathcal B}^{(d)}(\mathcal{H}):\underbrace{{\mathcal B}(\mathcal{H})\times...\times {\mathcal B}(\mathcal{H})}_{\text{d-times}}$ with $R_j:\mathcal{H}\longrightarrow \mathcal{H}$ be a tuple of  commuting bounded linear operators. Let  $\gamma= (\gamma_1,\cdots,\gamma_d) \in \mathbb{N}_0^d$
and set  $|\gamma|: = \displaystyle\sum_{1\leq j \leq d}\gamma_j$ and $\gamma!: =\displaystyle\prod_{1\leq k\leq d}\gamma_k!$.  Further, define ${\bf \large
R}^\gamma:= R_1^{\gamma_1} R_2^{\gamma_2}\cdots R_d^{\gamma_d}$ where $R^{\gamma_j}=\underbrace{R_j.R_j\cdots R_j}_{\gamma_j-\text{times}}$ $(1\leq j\leq d)$
and ${\bf \large R}^\ast=(R_1^\ast,\cdots,R_d^\ast)$.\par \vskip 0.2 cm \noindent

Let ${\bf\large R}=(R_1,\cdots,R_d)\in {\mathcal B}^{(d)}({\mathcal H})$  be a commuting multioperators and set for $l\in \mathbb{N}_0$:

\begin{equation}\label{eq1.4}{\bf\large S}_{l}({\bf\large R})=\sum_{0\leq k\leq l}(-1)^{l-j}\binom{l}{k}\big(R_1^*+\cdots+R_d^*\big)^k\big(R_1+\cdots+R_d\big)^{l-k},\end{equation}
and
\begin{equation}\label{eq1.5}{\bf\large M}_l({\bf\large R})=\sum_{0.\leq k\leq l}(-1)^{l-k}\binom{l}{k}\bigg(\sum_{|\gamma|=k}\frac{k!}{\gamma!}{\bf\large R}^{*\gamma}{\bf\large R}^\gamma \bigg),\end{equation} we have
$M_0({\bf\large R})=I$ and $M_1({\bf\large R})= \displaystyle\sum_{1\leq j \leq d} R^*_j R_j -I$.\par \vskip 0.2 cm \noindent
Gleason and Richter \cite{GR} considered the multivariable setting of $m$-isometries and studied their properties.
A commuting $d$-tuple of operators  ${\bf \large R}=(R_1,\cdots,R_d)\in {\mathcal B}^{(d)}({\mathcal H})$ is said to be an $m$-isometric multioperators
if it satisfies the operator equation
\begin{equation}
\sum_{0\leq k \leq m}(-1)^{m-k}\binom{m}{k}\bigg(\sum_{|\gamma|=k}\frac{k!}{\gamma!}{\bf \large R}^{\ast \gamma}{\bf \large R}^\gamma \bigg)=0.
\end{equation}
The authors in \cite{MCMN} considered the multivariable setting of $n$-isomsymetries and studied their properties.
A commuting $d$-tuple of operators  ${\bf \large R}=(R_1,\cdots,R_d)\in {\mathcal B}^{(d)}({\mathcal H})$ is said to be an $n$-symmetric multioperators
if it satisfies the operator equation
\begin{equation}
\sum_{0\leq k \leq m}(-1)^{m-k}\binom{m}{k}\big(R_1^*+\cdots+R_d^*\big)^k\big(R_1+\cdots+R_d)^{m-k}=0.\end{equation}
It hold
\begin{equation}\label{eq1.6}{\bf \large M}_{l+1}({\bf \large R})=\sum_{1\leq
j\leq d}R_j^\ast{\bf \large  M}_{l}({\bf \large R})R_j-{\bf \large M }_{l}({\bf R}).\end{equation} In particular if ${\bf \large R}$ is  an $m$-isometric multioperator, then ${\bf \large R}$ is  an $(m+k)$-isometric multioperators
 for all $k\geq 0$.

 Similarly,
 \begin{equation}\label{eq1.7}
 {\bf\large S}_{l+1}({\bf\large R})=\bigg(\sum_{1\leq k\leq d}R_k^*\bigg){\bf\large S}_{l}({\bf\large R})-{\bf\large S}_{l}({\bf\large R})\bigg(\sum_{1\leq k\leq d}R_k\bigg),
 \end{equation}
 and moreover if  ${\bf \large R}$ is  an $n$-symmetric multioperator, then ${\bf \large R}$ is  an $(m+k)$-symmmetric multioperators
 for all $k\geq 0$.
\medskip
\section{ Basic Properties of $(A;(m, n))$-isosymmetries}
\label{S3}
The aim of this section is to initiate the study of
$(m, \;n)$-isosymmetric  multioperators which are classes of operators that contains
$n$-symmetric multioperators and  $m$-isometric multioperators. We give some properties of these classes of operators.

\begin{definition}
Let  ${\bf\large R}=(R_1,\cdots,R_d)\in {\mathcal B}^{(d)}({\mathcal H})$ be a commuting multioperators. ${\bf\large R}$ is said to be  $(m,n)$-isosymmetric if
$\Lambda_{m,\;n}({\bf\large R})=0,$ where
\begin{eqnarray*}\Lambda_{m,\;n}({\bf\large R})&=&
                                                   \displaystyle\sum_{0\leq k\leq n}(-1)^{n-k}\binom{n}{k}\big(R_1^*+\cdots+R_d^*\big)^k{\bf \large M}_m\big({\bf\large R}\big)\big(R_1+\cdots+R_d\big)^{n-k} \\&=&
                                                  \displaystyle \sum_{0\leq k\leq m}(-1)^{m-k}\binom{m}{k}\bigg(\sum_{|\gamma|=k}\frac{k!}{\gamma!}{\bf\large R}^{*\gamma}{\bf \large S}_{n}\big({\bf\large R}\big){\bf\large R}^\gamma \bigg).
                                                 \end{eqnarray*}
\end{definition}
\begin{remark}
$(1)$ Every $m$-isometric multioperators is an $(m,n)$-isosymmetric multioperators
and every $m$-isosymetric multioperators is an $(m,n)$-isosymmetric multioperators.\par \vskip 0.2 cm \noindent
\end{remark}
\begin{remark} We make the following observations\par \vskip 0.2 cm \noindent
\begin{equation}\label{eq2.1}
\Lambda_{1,\;0}\big({\bf\large R} \big)=\sum_{1\leq k\leq d}R_k^*R_k-I,
\end{equation}
\begin{equation}\label{eq2.2}
\Lambda_{0,\;1}\big({\bf\large R} \big)=\sum_{1\leq k\leq d}\big(R_k^*-R_k\big),
\end{equation}
\begin{eqnarray}\label{eq2.3}
\Lambda_{1,\;1}\big({\bf\large R} \big)=\!\big(\sum_{1\leq k\leq d}R_k^*\big)\bigg(\sum_{1\leq j\leq d}R_j^*R_j\!-\!I\bigg)\!-\!\big(\sum_{1\leq j\leq d}R_j^*R_j-I\bigg)\big(\sum_{1\leq k\leq d}R_k\big)\end{eqnarray}
or
\begin{eqnarray}\label{eq2.4}\Lambda_{1,\;1}\big({\bf\large R} \big)=
\sum_{1\leq k\leq d}\bigg(R_k^* \sum_{1\leq j\leq d}\big(R_j^*-R_j\big)R_k\bigg)- \sum_{1\leq j\leq d}\big(R_j^*-R_j\big).
\end{eqnarray}
\end{remark}
\begin{example}
Let  $R\in \mathcal{B}(\mathcal{H})$ be an $(m,n)$-isosymmetric single
operator, $d \in \mathbb{N}$ and
$\beta=(\beta_1,\cdots,\beta_d)\in (\mathbb{R}^d,\;
\|.\|_2)$ with  $\|\beta\|_2^2=\displaystyle\sum_{1\leq j\leq
d}\beta_j^2=1.$ Then the multioperator ${\bf\large
R}=(R_1,\cdots,R_d)$ where $R_j=\beta_jR$ for $j=1,\cdots,d$ is an
 $(m,n)$-isosymmetric multioperators.\par \vskip
0.2 cm \noindent  In fact, it is obvious that $R_lR_k=R_kR_j$ for all
$1\leq j, \;k\leq d.$ From the multinomial expansion, we get
for any natural number $q$
\begin{eqnarray*} 1 = \bigg(\beta_1^2+\beta_2^2+\cdots+\beta_d^2\bigg)^q
&=&\sum_{\gamma_1+\gamma_2+\cdots+\gamma_d=q}\binom{q}{\gamma_1,\gamma_2,\cdots,\gamma_p}\prod_{1\leq
l\leq d}\beta_l^{2\gamma_i}
\\&=&\sum_{|\gamma|=q}\frac{q!}{\gamma!}|\beta^\gamma|^2.\end{eqnarray*}
Thus, we have
\begin{eqnarray*}
{\bf \large M}_m({\bf\large R})&=&\sum_{0\leq j\leq
m}(-1)^{m-j}\binom{m}{j}\bigg(\sum_{|\gamma|=j}\frac{j!}{\gamma!}{\bf\large                             
R}^{*\gamma}{\bf\large R}^\gamma\bigg)\\&=&\sum_{0\leq j\leq
m}(-1)^{m-j}\binom{m}{j}\bigg(\sum_{|\gamma|=j}\frac{j!}{\gamma!}
\prod_{1\leq
j \leq d}\beta_j^{2\gamma_j}
R^{*|\gamma|}R^{|\gamma|}\bigg)\\&=&
\sum_{0\leq j\leq
m}(-1)^{m-j}\binom{m}{j}R^{*j}R^{j}.
\end{eqnarray*}
\begin{eqnarray*}
\Lambda_{m,\;n}({\bf\large R})&=&\displaystyle\sum_{0\leq k\leq n}(-1)^{n-k}\binom{n}{k}\big(R_1^*+\cdots+R_d^*\big)^k{\bf \large M}_m\big({\bf\large R}\big)\big(R_1+\cdots+R_d\big)^{n-k} \\
&=&\bigg(\sum_{1\leq j\leq d}\beta_j\bigg)^n\bigg(     \sum_{0\leq k\leq n}(-1)^{n-k}\binom{n}{k}R^{*(k)}\bigg(\sum_{0\leq j\leq m}(-1)^j\binom{m}{j}R^{*(m-j)}R^{m-j}\bigg)R^{n-k} \bigg)\\&=&0.
\end{eqnarray*}
Therefore ${\bf \large R}$ is a
$(m,n)$-isosymmetric multioperators as required.
\end{example}
\par \vskip 0.2 cm \noindent
In the following  example we show that there is a multioperators which is $(m,n)$-isosymmetric, but neither
 $m$-isometric  nor $n$-isosymmetric multioperators for some  multiindex  and a positive integer $m$ and $n$.
Thus, the proposed new class of multioperators contains the classes of $m$-isometric mutioperatros and $n$-symmetric multioperators
as proper subsets.
\begin{example}
Consider ${\bf\large R}=\big(R_1,\;R_2\big)$ where $R_1=\left(
                                                          \begin{array}{ccc}
                                                           0& 0 & 0 \\
                                                            1 & 0 & 0 \\
                                                           0 & 0 & 0 \\
                                                          \end{array}
                                                        \right)$ and  $R_2=\left(
                                                          \begin{array}{ccc}
                                                           1& 0 & 0 \\
                                                            0 & 1 & 0 \\
                                                           0 & 0 & 1 \\
                                                          \end{array}
                                                        \right)$. A simple computation shows that
                                                         $$\big(R_1+R_2)^*\bigg(  R_1^*R_1+R_2^*R_2-I  \bigg)-\bigg(  R_1^*R_1+R_2^*R_2-I  \bigg)\big(R_1+R_2\big)=0.$$ Therefore, ${\bf\large R}$ is a $(1,1)$-isosymmetric pairs of operators. However
                                                         ${\bf\large R}$ is  not a $1$-isometric and not a 1-symmetric pairs due to the following facts
                                                        $$ R_1^*R_1+R_2^*R_2-I\not=0\; \;\hbox{and}\;\;\big(R_1+R_2)^*-\big(R_1+R_2)\not=0.$$

\end{example}

\begin{theorem}
Let ${\bf\large R}=(R_1,\cdots,R_d)\in {\mathcal B}^{(d)}({\mathcal H})$ be a commuting multioperators. Then following statements hold.
\begin{equation}\label{eq2.5}
\Lambda_{m+1,\;n} \big({\bf\large R})\big)=\sum_{1\leq j\leq d}R_j^*\Lambda_{m,\;n}\big( {\bf\large R}\big)R_j-\Lambda_{m,\;n}\big({\bf\large R}\big).
\end{equation}
\begin{equation}\label{eq2.6}
\Lambda_{m,\;n+1} \big({\bf\large R})\big)=\sum_{1\leq j\leq d}R_j^*\Lambda_{m,\;n}\big( {\bf\large R}\big)-\sum_{1\leq j\leq d}\Lambda_{m,\;n}\big({\bf\large R}\big)R_j.
\end{equation}
\end{theorem}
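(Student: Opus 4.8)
The plan is to deduce each identity by substituting the one-step recurrences \eqref{eq1.6} and \eqref{eq1.7} for ${\bf\large M}_m({\bf\large R})$ and ${\bf\large S}_n({\bf\large R})$ into the appropriate one of the two equivalent expressions for $\Lambda_{m,n}({\bf\large R})$ coming from the definition of $\Lambda_{m,n}$. Concretely, to prove \eqref{eq2.5} I would work with the representation
\[
\Lambda_{m,n}({\bf\large R})=\sum_{0\leq k\leq n}(-1)^{n-k}\binom{n}{k}\big(R_1^*+\cdots+R_d^*\big)^k\,{\bf\large M}_m({\bf\large R})\,\big(R_1+\cdots+R_d\big)^{n-k},
\]
and to prove \eqref{eq2.6} I would use the representation
\[
\Lambda_{m,n}({\bf\large R})=\sum_{0\leq k\leq m}(-1)^{m-k}\binom{m}{k}\sum_{|\gamma|=k}\frac{k!}{\gamma!}\,{\bf\large R}^{*\gamma}\,{\bf\large S}_n({\bf\large R})\,{\bf\large R}^\gamma .
\]
The single fact that makes all the manipulations legitimate is that, since $R_1,\dots,R_d$ commute pairwise, so do their adjoints; hence for every fixed $j$ the operator $R_j^*$ commutes with every power $\big(R_1^*+\cdots+R_d^*\big)^k$ and with every ${\bf\large R}^{*\gamma}$, while $R_j$ commutes with every power $\big(R_1+\cdots+R_d\big)^{n-k}$ and with every ${\bf\large R}^\gamma$. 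Note one never needs $R_j^*$ to commute with the $R_i$'s.

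For \eqref{eq2.5}: replace $m$ by $m+1$ in the first representation and insert ${\bf\large M}_{m+1}({\bf\large R})=\sum_{1\leq j\leq d}R_j^*{\bf\large M}_m({\bf\large R})R_j-{\bf\large M}_m({\bf\large R})$ from \eqref{eq1.6}. In the term containing $\sum_j R_j^*{\bf\large M}_m({\bf\large R})R_j$, push each $R_j^*$ to the left past $\big(R_1^*+\cdots+R_d^*\big)^k$ and each $R_j$ to the right past $\big(R_1+\cdots+R_d\big)^{n-k}$; all sums being finite, interchange the summations over $j$ and over $k$. The bracketed $k$-sum that appears is precisely $\Lambda_{m,n}({\bf\large R})$, while the term coming from $-{\bf\large M}_m({\bf\large R})$ contributes $-\Lambda_{m,n}({\bf\large R})$, giving $\Lambda_{m+1,n}({\bf\large R})=\sum_{1\leq j\leq d}R_j^*\Lambda_{m,n}({\bf\large R})R_j-\Lambda_{m,n}({\bf\large R})$.

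For \eqref{eq2.6}: replace $n$ by $n+1$ in the second representation and insert ${\bf\large S}_{n+1}({\bf\large R})=\big(\sum_{1\leq k\leq d}R_k^*\big){\bf\large S}_n({\bf\large R})-{\bf\large S}_n({\bf\large R})\big(\sum_{1\leq k\leq d}R_k\big)$ from \eqref{eq1.7}. In each summand, commute $\sum_{1\leq l\leq d}R_l^*$ to the far left past ${\bf\large R}^{*\gamma}$ and commute $\sum_{1\leq l\leq d}R_l$ to the far right past ${\bf\large R}^\gamma$, then pull these common left and right factors out of the finite double sum over $k$ and $\gamma$. What is left between them is exactly $\Lambda_{m,n}({\bf\large R})$, so $\Lambda_{m,n+1}({\bf\large R})=\big(\sum_{1\leq l\leq d}R_l^*\big)\Lambda_{m,n}({\bf\large R})-\Lambda_{m,n}({\bf\large R})\big(\sum_{1\leq l\leq d}R_l\big)$, which is the stated formula.

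The computations are routine once the representations from the definition are taken as given, so there is no genuine analytic obstacle. The only points requiring a little care are the bookkeeping of which operator may be commuted past which, and -- more importantly -- selecting the correct one of the two equivalent forms of $\Lambda_{m,n}$ for each identity: attempting \eqref{eq2.5} from the ${\bf\large S}_n$-form (or \eqref{eq2.6} from the ${\bf\large M}_m$-form) would force commuting $R_j^*$ past ${\bf\large M}_m({\bf\large R})$ (respectively ${\bf\large S}_n({\bf\large R})$), which is not valid in general, and no clean factorization would result.
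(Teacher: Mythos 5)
Your proposal is correct and follows essentially the same route as the paper: the paper proves \eqref{eq2.5} by writing $\Lambda_{m+1,n}$ in the ${\bf\large M}$-form, substituting the recurrence \eqref{eq1.6}, and commuting $R_j^*$ and $R_j$ past the powers of $\sum R_i^*$ and $\sum R_i$, then states that \eqref{eq2.6} follows by the analogous argument (which, as you spell out, uses the ${\bf\large S}$-form together with \eqref{eq1.7}). Your remark about which of the two equivalent representations must be used for each identity is exactly the point the paper leaves implicit.
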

\begin{proof}
By taking into account (\ref{eq1.6}) we have
\begin{eqnarray*}
\Lambda_{m+1,\;n}({\bf\large R})&=&\displaystyle\sum_{0\leq k\leq n}(-1)^{n-k}\binom{n}{k}\big(R_1^*+\cdots+R_d^*\big)^k{\bf \large M}_{m+1}\big({\bf\large R}\big)\big(R_1+\cdots+R_d\big)^{n-k}\\&=&
\sum_{0\leq k\leq n}(-1)^{n-k}\binom{n}{k}\big(R_1^*+\cdots+R_d^*\big)^k\bigg(  \sum_{1\leq
j\leq d}R_j^\ast{\bf \large  M}_{m}({\bf \large R})R_j-{\bf \large M }_{m}({\bf R}) \bigg)\big(R_1+\cdots+R_d\big)^k
\\&=&\sum_{1\leq j\leq d}R_j^*\sum_{0\leq k\leq n}(-1)^{n-k}\binom{n}{k}\big(R_1^*+\cdots+R_d^*\big)^{k}{\bf\large M}_m({\bf\large R})\big(R_1+\cdots+R_d\big)^{n-k} R_j\\&&-
\sum_{0\leq k\leq n}(-1)^{n-k}\binom{n}{k}\big(R_1^*+\cdots+R_d^*\big)^k{\bf\large M}_m({\bf\large R})\big(R_1+\cdots+R_d\big)^{n-k}\\
&=&\sum_{1\leq j\leq d}R_j^*\Lambda_{m,\;n}({\bf\large R})R_j-\Lambda_{m,\;n}(\bf\large R).
\end{eqnarray*}
\noindent Using a similar argument to that employed above we can prove the identity (\ref{eq2.6}).
\end{proof}
\begin{corollary}\label{cor2.1}
  Let ${\bf\large R}=(R_1,\cdots,R_d)\in {\mathcal B}^{(d)}({\mathcal H})$ be a  commuting multioperators. If ${\bf\large R}$
 is an $(m, n)$-isosymmetric, then ${\bf\large R}$ is $(m^\prime; n^\prime)$-isosymmetric for all
$n^\prime \geq n$  and $m^\prime \geq m.$
\end{corollary}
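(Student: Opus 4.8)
The plan is to prove the two recursion formulas \eqref{eq2.5} and \eqref{eq2.6}, and then deduce Corollary~\ref{cor2.1} by a straightforward induction. For \eqref{eq2.5}, I would start from the first of the two equivalent expressions for $\Lambda_{m,n}$, namely the one built on ${\bf\large M}_m({\bf\large R})$, and substitute the known one-step recursion \eqref{eq1.6} for ${\bf\large M}_{m+1}({\bf\large R}) = \sum_{1\le j\le d} R_j^\ast {\bf\large M}_m({\bf\large R}) R_j - {\bf\large M}_m({\bf\large R})$. The key observation that makes the computation go through is that each $R_j$ commutes with every factor $(R_1^\ast+\cdots+R_d^\ast)^k$ appearing on its left and with $(R_1+\cdots+R_d)^{n-k}$ appearing on its right — wait, that is \emph{not} true in general, since the $R_j$ need not commute with the $R_i^\ast$. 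So the real mechanism is different: one pulls the outer sum $\sum_j R_j^\ast(\cdot)R_j$ through the sum over $k$, using only that the scalar binomial coefficients and signs are inert, to land on exactly $\sum_j R_j^\ast \Lambda_{m,n}({\bf\large R}) R_j - \Lambda_{m,n}({\bf\large R})$. Crucially, the $R_j^\ast$ and $R_j$ that come from \eqref{eq1.6} sit \emph{innermost}, wrapping ${\bf\large M}_m$, so after distributing, the term $(R_1^\ast+\cdots+R_d^\ast)^k \big(\sum_j R_j^\ast {\bf\large M}_m R_j\big)(R_1+\cdots+R_d)^{n-k}$ must be reorganized: since $R_j^\ast$ commutes with $(R_1^\ast+\cdots+R_d^\ast)^k$ (these are all adjoints of commuting operators, hence commuting) and $R_j$ commutes with $(R_1+\cdots+R_d)^{n-k}$, we may slide $R_j^\ast$ to the far left and $R_j$ to the far right, producing $\sum_j R_j^\ast \big[(R_1^\ast+\cdots+R_d^\ast)^k {\bf\large M}_m (R_1+\cdots+R_d)^{n-k}\big] R_j$. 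Summing over $k$ with the binomial weights gives the first term; the $-{\bf\large M}_m$ term gives the second. This is the step I expect to require the most care, precisely because the commutation being invoked (adjoints of commuting operators commute with each other; operators commute with sums of themselves) must be stated cleanly to justify the rearrangement.

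For \eqref{eq2.6} I would argue symmetrically but using the \emph{second} expression for $\Lambda_{m,n}$, the one built on ${\bf\large S}_n({\bf\large R})$, together with the recursion \eqref{eq1.7}, namely ${\bf\large S}_{n+1}({\bf\large R}) = \big(\sum_k R_k^\ast\big){\bf\large S}_n({\bf\large R}) - {\bf\large S}_n({\bf\large R})\big(\sum_k R_k\big)$. Substituting this into $\Lambda_{m,n+1}({\bf\large R}) = \sum_{0\le j\le m}(-1)^{m-j}\binom{m}{j}\sum_{|\gamma|=j}\frac{j!}{\gamma!}{\bf\large R}^{\ast\gamma}{\bf\large S}_{n+1}({\bf\large R}){\bf\large R}^\gamma$ and distributing, the factor $\sum_k R_k^\ast$ sits immediately to the left of ${\bf\large S}_n$, i.e. to the right of ${\bf\large R}^{\ast\gamma}$; since ${\bf\large R}^{\ast\gamma}$ is a product of the $R_i^\ast$, it commutes with $\sum_k R_k^\ast$, so this factor can be pulled out to the very left of the whole expression. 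Likewise $\sum_k R_k$ commutes with ${\bf\large R}^\gamma$ and can be pulled out to the far right. This yields exactly $\big(\sum_j R_j^\ast\big)\Lambda_{m,n}({\bf\large R}) - \Lambda_{m,n}({\bf\large R})\big(\sum_j R_j\big)$, which is \eqref{eq2.6}.

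Finally, Corollary~\ref{cor2.1} follows by a two-variable induction. Assume ${\bf\large R}$ is $(m,n)$-isosymmetric, i.e. $\Lambda_{m,n}({\bf\large R})=0$. Then \eqref{eq2.5} immediately gives $\Lambda_{m+1,n}({\bf\large R}) = \sum_j R_j^\ast \cdot 0 \cdot R_j - 0 = 0$, and \eqref{eq2.6} gives $\Lambda_{m,n+1}({\bf\large R}) = \big(\sum_j R_j^\ast\big)\cdot 0 - 0 \cdot \big(\sum_j R_j\big) = 0$. Iterating, $\Lambda_{m',n'}({\bf\large R})=0$ for every $m'\ge m$ and $n'\ge n$: first raise the first index from $m$ to $m'$ using \eqref{eq2.5} repeatedly, then raise the second index from $n$ to $n'$ using \eqref{eq2.6} repeatedly (the order is immaterial since each step only needs the previous $\Lambda$ to vanish). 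The only subtlety worth a sentence is noting that the two recursions commute appropriately, so the double induction is well-posed; this is immediate once both identities are established. The main obstacle throughout is bookkeeping the noncommutativity correctly in the proof of \eqref{eq2.5}, since there one genuinely shuffles operators past sums of their own adjoints, and one must be explicit that it is the mutual commutativity of $\{R_1,\dots,R_d\}$ (hence of $\{R_1^\ast,\dots,R_d^\ast\}$) that licenses every move.
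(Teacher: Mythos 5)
Your proposal is correct and follows essentially the same route as the paper: the two recursion identities \eqref{eq2.5} and \eqref{eq2.6} are exactly the content of the theorem immediately preceding the corollary (proved there by substituting \eqref{eq1.6}, resp.\ \eqref{eq1.7}, and using that the $R_j$ mutually commute, hence the $R_j^\ast$ do as well), and the corollary is then the immediate iteration $\Lambda_{m,n}=0\Rightarrow\Lambda_{m+1,n}=\Lambda_{m,n+1}=0$ that you describe. Your self-correction about which operators actually need to commute past which factors lands on the right justification, matching the paper's computation.
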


\begin{lemma}(\cite{CBS})
Let  $\alpha = (\alpha_1, \cdots, \alpha_p),  \gamma= (\gamma_1, \cdots, \gamma_d) \in  {\mathbb N}_0^d$,  $ k \in {\mathbb N}_0$ and $ n \in {\mathbb N}$  be    such that $|\alpha| + | \gamma| + k = n+1$. For $ 1 \leq i \leq d$, let $\epsilon(i) \in {\mathbb N}_0^d$
where $\epsilon(i)$ is the $d$-tuple with $1$ in the $i$-th entry and zeros elsewhere.
   Then,
\begin{equation}\label{eq2.7}
\binom{n+1}{\alpha,\gamma,k} = \sum_{1\leq i \leq d}\bigg(\binom{n}{\alpha-\epsilon(i),\gamma,k} + \binom{n}{\alpha,\gamma-\epsilon(i),k}\bigg) +  \binom{n}{\alpha,\gamma,k-1}.
\end{equation}
\end{lemma}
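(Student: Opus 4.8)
The plan is to establish \eqref{eq2.7} by a direct algebraic manipulation of multinomial coefficients, guided by a combinatorial picture for the bookkeeping. Recall that for multi-indices the symbol $\binom{n+1}{\alpha,\gamma,k}$ denotes $\dfrac{(n+1)!}{\alpha!\,\gamma!\,k!}$, where $\alpha!=\prod_{1\le j\le d}\alpha_j!$, and that one adopts the standing convention that a multinomial coefficient vanishes whenever one of its lower entries is a negative integer; thus $\binom{n}{\alpha-\epsilon(i),\gamma,k}=0$ when $\alpha_i=0$, and $\binom{n}{\alpha,\gamma,k-1}=0$ when $k=0$.

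First I would invoke the hypothesis $|\alpha|+|\gamma|+k=n+1$ to write
\begin{equation*}
\binom{n+1}{\alpha,\gamma,k}=\frac{(n+1)!}{\alpha!\,\gamma!\,k!}=\frac{n!}{\alpha!\,\gamma!\,k!}\Big(\sum_{1\le i\le d}\alpha_i+\sum_{1\le i\le d}\gamma_i+k\Big),
\end{equation*}
and then distribute the three blocks of terms. For a fixed $i$ with $\alpha_i\ge 1$, absorbing the factor $\alpha_i$ into the factorial gives $\dfrac{\alpha_i\,n!}{\alpha!\,\gamma!\,k!}=\dfrac{n!}{(\alpha-\epsilon(i))!\,\gamma!\,k!}=\binom{n}{\alpha-\epsilon(i),\gamma,k}$, while if $\alpha_i=0$ the term is $0$ and the convention makes the corresponding coefficient $0$ as well. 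The identical computation for the $\gamma_i$-block yields $\binom{n}{\alpha,\gamma-\epsilon(i),k}$, and the remaining term gives $\dfrac{k\,n!}{\alpha!\,\gamma!\,k!}=\binom{n}{\alpha,\gamma,k-1}$, the case $k=0$ again being covered by the convention. Summing the three blocks reproduces exactly the right-hand side of \eqref{eq2.7}.

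The same identity also has a transparent combinatorial reading: $\binom{n+1}{\alpha,\gamma,k}$ counts the ways to distribute $\{1,\dots,n+1\}$ into $2d+1$ labelled boxes of prescribed sizes $\alpha_1,\dots,\alpha_d,\gamma_1,\dots,\gamma_d,k$, and sorting those distributions according to which box receives the distinguished element $n+1$ produces precisely the three groups of terms on the right. Either way the argument is short; the only delicate point is the handling of the degenerate cases $\alpha_i=0$, $\gamma_i=0$ and $k=0$ through the vanishing convention, and once that is agreed there is no genuine obstacle. Since the lemma is borrowed from \cite{CBS}, I would be content to record this brief verification rather than reproduce the reference's argument.
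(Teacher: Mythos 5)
Your argument is correct: writing $n+1=|\alpha|+|\gamma|+k$, multiplying $\frac{n!}{\alpha!\,\gamma!\,k!}$ through this sum, and absorbing each factor $\alpha_i$, $\gamma_i$ or $k$ into the corresponding factorial (with the vanishing convention disposing of the degenerate cases $\alpha_i=0$, $\gamma_i=0$, $k=0$) is precisely the standard verification of this Pascal-type multinomial recurrence. The paper gives no proof of its own, deferring entirely to \cite{CBS}, so there is nothing to compare against; your short algebraic computation (or, equivalently, the combinatorial sorting of distributions by the box containing the element $n+1$) fully settles the lemma.
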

\begin{proposition}\label{pro2.1}
Let ${\large \bf R}=(R_1,\cdots,R_d) $ and ${\large \bf Q}=(Q_1,\cdots,Q_d)$ be two commuting multioperators for which  $\big[R_j, Q_i\big]=\big[R_j, Q_i^*\big]=0$ for all $j,i\in \{1,\cdots,d\;\}$. Then, for a positive integers $m$ and $n$,  the following identity holds:
\begin{equation}\label{eq2.8}
  {\Lambda}_{m,\;n}({\bf \large R} + {\bf \large Q})=\sum_{0\leq j\leq n}\sum_{|\alpha|+|\gamma|+k =m}\binom{n}{j}\binom{m}{\alpha,\gamma,k}
({\bf \large R+Q})^{*\alpha} \cdot {\bf \large Q}^{*\gamma} \cdot {\Lambda}_{k,\;n-j}({\bf R}){\bf\large S}_{j}({\bf\large Q}){\bf\large R}^\gamma{\bf\large Q}^\alpha, \end{equation} where
$\displaystyle\binom{n}{j}=\displaystyle\frac{n!}{(n-j)!j!}$\;\;\hbox{and}\;\;$\displaystyle\binom{m}{\alpha,\gamma,k}=\displaystyle\frac{m!}{\alpha! \gamma! k!}.$
\end{proposition}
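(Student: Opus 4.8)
The plan is to argue by induction on $m$, using the recurrence $(\ref{eq2.5})$ applied to the commuting multioperator $\mathbf{R}+\mathbf{Q}$ and feeding the three‑index Pascal‑type identity $(\ref{eq2.7})$ into the inductive step. The hypotheses $[R_j,Q_i]=[R_j,Q_i^*]=0$, together with the commutativity of the $R_i$'s, of the $Q_i$'s and of their adjoints, will be used throughout to move the symbols $R_i,R_i^*,Q_i,Q_i^*$ past polynomials built from the other tuple. In particular $\mathbf{S}_j(\mathbf{Q})$, being a polynomial in the $Q_l$ and $Q_l^*$, commutes with every $R_i$ and every $R_i^*$ (hence with $\Lambda_{k,n-j}(\mathbf{R})$ and with $\mathbf{R}^\gamma$), and each of $R_i^*,Q_i^*$ commutes with every $(R_l+Q_l)^*$ and with every $Q_l^*,R_l^*$ respectively; also $\mathbf{R}+\mathbf{Q}$ is itself a commuting tuple, so $(\ref{eq2.5})$ applies to it.

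\emph{Base case $m=0$.} Since $\mathbf{M}_0(\mathbf{T})=I$ one has $\Lambda_{0,n}(\mathbf{T})=\mathbf{S}_n(\mathbf{T})$ and $\Lambda_{0,n-j}(\mathbf{R})=\mathbf{S}_{n-j}(\mathbf{R})$, so $(\ref{eq2.8})$ at $m=0$ is the identity
\[
\mathbf{S}_n(\mathbf{R}+\mathbf{Q})=\sum_{0\le j\le n}\binom{n}{j}\mathbf{S}_{n-j}(\mathbf{R})\,\mathbf{S}_j(\mathbf{Q}).
\]
I would prove this by a subsidiary induction on $n$: the cases $n\le 1$ follow from $\mathbf{S}_0(\mathbf{T})=I$ and $(\ref{eq2.2})$; for the step, apply $(\ref{eq1.7})$ to $\mathbf{S}_{n+1}(\mathbf{R}+\mathbf{Q})$, split each of $\sum_k(R_k+Q_k)^*$ and $\sum_k(R_k+Q_k)$ into its $\mathbf{R}$‑ and $\mathbf{Q}$‑parts, commute $\sum_kR_k$ and $\sum_kR_k^*$ past $\mathbf{S}_{n-j}(\mathbf{R})$ so as to apply $(\ref{eq1.7})$ separately to $\mathbf{R}$ and to $\mathbf{Q}$, and recombine coefficients via $\binom{n}{j}+\binom{n}{j-1}=\binom{n+1}{j}$.

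\emph{Inductive step.} Put $\beta(X):=\sum_{1\le i\le d}(R_i+Q_i)^*X(R_i+Q_i)$ and
\[
Z_{j,\alpha,\gamma,k}:=(\mathbf{R+Q})^{*\alpha}\,\mathbf{Q}^{*\gamma}\,\Lambda_{k,n-j}(\mathbf{R})\,\mathbf{S}_j(\mathbf{Q})\,\mathbf{R}^{\gamma}\,\mathbf{Q}^{\alpha},
\]
so that the induction hypothesis reads $\Lambda_{m,n}(\mathbf{R}+\mathbf{Q})=\sum_{0\le j\le n}\sum_{|\alpha|+|\gamma|+k=m}\binom{n}{j}\binom{m}{\alpha,\gamma,k}\,Z_{j,\alpha,\gamma,k}$. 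By $(\ref{eq2.5})$ for $\mathbf{R}+\mathbf{Q}$ and linearity, $\Lambda_{m+1,n}(\mathbf{R}+\mathbf{Q})=\sum_{0\le j\le n}\sum_{|\alpha|+|\gamma|+k=m}\binom{n}{j}\binom{m}{\alpha,\gamma,k}\,(\beta-I)(Z_{j,\alpha,\gamma,k})$. Expanding $(R_i+Q_i)^*=R_i^*+Q_i^*$ and $(R_i+Q_i)=R_i+Q_i$ produces four families of terms, and shuffling the factors with the commutation relations one verifies
\[
\sum_i R_i^*Z_{j,\alpha,\gamma,k}R_i-Z_{j,\alpha,\gamma,k}=Z_{j,\alpha,\gamma,k+1},\qquad \sum_i Q_i^*Z_{j,\alpha,\gamma,k}R_i=\sum_i Z_{j,\alpha,\gamma+\epsilon(i),k},
\]
\[
\sum_i\big(R_i^*Z_{j,\alpha,\gamma,k}Q_i+Q_i^*Z_{j,\alpha,\gamma,k}Q_i\big)=\sum_i Z_{j,\alpha+\epsilon(i),\gamma,k}.
\]
For the first identity one pushes $R_i$ leftward past $\mathbf{Q}^\alpha,\mathbf{R}^\gamma,\mathbf{S}_j(\mathbf{Q})$ and $R_i^*$ rightward past $(\mathbf{R+Q})^{*\alpha},\mathbf{Q}^{*\gamma}$, so that the central factor becomes $\sum_i R_i^*\Lambda_{k,n-j}(\mathbf{R})R_i-\Lambda_{k,n-j}(\mathbf{R})=\Lambda_{k+1,n-j}(\mathbf{R})$ by $(\ref{eq2.5})$ for $\mathbf{R}$; in the second, the left $Q_i^*$ turns $\mathbf{Q}^{*\gamma}$ into $\mathbf{Q}^{*(\gamma+\epsilon(i))}$ and the right $R_i$ turns $\mathbf{R}^\gamma$ into $\mathbf{R}^{\gamma+\epsilon(i)}$; in the third, the right $Q_i$ turns $\mathbf{Q}^\alpha$ into $\mathbf{Q}^{\alpha+\epsilon(i)}$ while on the left $R_i^*\mathbf{Q}^{*\gamma}+Q_i^*\mathbf{Q}^{*\gamma}=(R_i+Q_i)^*\mathbf{Q}^{*\gamma}$ recombines with $(\mathbf{R+Q})^{*\alpha}$ into $(\mathbf{R+Q})^{*(\alpha+\epsilon(i))}$. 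Adding the four families, $(\beta-I)(Z_{j,\alpha,\gamma,k})=Z_{j,\alpha,\gamma,k+1}+\sum_i Z_{j,\alpha+\epsilon(i),\gamma,k}+\sum_i Z_{j,\alpha,\gamma+\epsilon(i),k}$. Substituting this and collecting the coefficient of $Z_{j,\alpha,\gamma,k}$ over all $(\alpha,\gamma,k)$ with $|\alpha|+|\gamma|+k=m+1$ gives $\binom{m}{\alpha,\gamma,k-1}+\sum_i\big(\binom{m}{\alpha-\epsilon(i),\gamma,k}+\binom{m}{\alpha,\gamma-\epsilon(i),k}\big)$, which equals $\binom{m+1}{\alpha,\gamma,k}$ by $(\ref{eq2.7})$, while the outer sum $\sum_j\binom{n}{j}$ is untouched. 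This closes the induction, and the proposition follows.

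I expect the genuine difficulty to lie in the regrouping of the inductive step: one must check that \emph{every} term produced by $(\beta-I)(Z_{j,\alpha,\gamma,k})$ collapses, after the shuffles, onto exactly one of the three ``neighbours'' $Z_{j,\alpha+\epsilon(i),\gamma,k}$, $Z_{j,\alpha,\gamma+\epsilon(i),k}$, $Z_{j,\alpha,\gamma,k+1}$, with no surviving mixed term --- this is precisely what makes $(\ref{eq2.7})$ applicable --- and one should record which commutation hypothesis is invoked at each of the four moves, since the argument genuinely uses the cross‑commutation relations $[R_j,Q_i]=[R_j,Q_i^*]=0$ at several places (for instance already in the $n\ge 2$ case of the base‑case identity).
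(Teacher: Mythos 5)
Your argument is correct and rests on exactly the same ingredients as the paper's proof --- the recurrences \eqref{eq2.5}, \eqref{eq2.6}/\eqref{eq1.7}, the cross-commutation shuffles, and the Pascal-type identity \eqref{eq2.7} --- only with the double induction scheduled differently: you settle the full $n$-dependence at $m=0$ via the convolution identity $\mathbf{S}_n(\mathbf{R}+\mathbf{Q})=\sum_{j}\binom{n}{j}\mathbf{S}_{n-j}(\mathbf{R})\mathbf{S}_j(\mathbf{Q})$ and then induct on $m$ alone, whereas the paper starts from the base case $(1,1)$, inducts on $m$ at $n=1$, and finally inducts on $n$. The computations coincide (your base-case $n$-induction is the paper's $\mathbf{A}(m,n,\mathbf{R},\mathbf{Q})$ step specialized to $m=0$, and your $(\beta-I)(Z_{j,\alpha,\gamma,k})$ regrouping is the paper's $(m,1)\to(m+1,1)$ step carried out for general $n$), so this is essentially the paper's proof in a slightly cleaner arrangement.
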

\begin{proof} We prove (\ref{eq2.8}) by two-dimensional induction principle on $(m,n) \in \mathbb{N}^2$.
We first check that (\ref{eq2.8})  is true for $(m,n)=(1,1)$. In fact,
\begin{eqnarray*}
&&\sum_{0\leq j\leq 1}\sum_{|\alpha|+|\gamma|+k =1}\binom{1}{j}\binom{1}{\alpha,\gamma,k}
({\bf \large R+Q})^{*\alpha} \cdot {\bf \large Q}^{*\gamma} \cdot {\Lambda}_{k,\;1-j}({\bf R}){\bf\large S}_{j}({\bf\large Q}){\bf\large R}^\alpha{\bf\large Q}^\gamma\\&=&\sum_{0\leq j\leq 1}\bigg\{\sum_{1\leq i\leq d }\big(R_i^*+Q_i^*\big){\Lambda}_{0,\;1-j}({\bf R}){\bf\large S}_{j}({\bf\large Q})Q_i
+\sum_{1\leq i\leq d}Q_i^*{\Lambda}_{0,\;1-j}({\bf R}){\bf\large S}_{j}({\bf\large Q})R_i\\&&+{\Lambda}_{1,\;1-j}({\bf R}){\bf\large S}_{j}({\bf\large Q})\bigg\}\\&=&
\bigg\{\sum_{1\leq i\leq d }\big(R_i^*+Q_i^*\big){\Lambda}_{0,\;1}({\bf R}){\bf\large S}_{0}({\bf\large Q})Q_i
+\sum_{1\leq i\leq d}Q_i^*{\Lambda}_{0,\;1}({\bf R}){\bf\large S}_{0}({\bf\large Q})R_i+{\Lambda}_{1,\;1}({\bf R}){\bf\large S}_{0}({\bf\large Q})\bigg\}\\&&+\bigg\{\sum_{1\leq i\leq d }\big(R_i^*+Q_i^*\big){\Lambda}_{0,\;0}({\bf R}){\bf\large S}_{1}({\bf\large Q})Q_i
+\sum_{1\leq i\leq d}Q_i^*{\Lambda}_{0,\;0}({\bf R}){\bf\large S}_{1}({\bf\large Q})R_i+{\Lambda}_{1,\;0}({\bf R}){\bf\large S}_{1}({\bf\large Q})\bigg\}
\\&=&\sum_{1\leq i\leq d}\big(R_i^*+Q_i^*\big)\Lambda_{0,\;1}\big({\bf\large R+Q}\big)Q_i+\sum_{1\leq i\leq d}+Q_i^*\Lambda_{0,\;1}\big({\bf\large R+Q}\big)R_i\\&&+\Lambda_{1,\;1}\big({\bf\large R}\big)+\Lambda_{1,\;0}\big({\bf \large R}\big){\bf\large S}_1({\bf\large Q})
\end{eqnarray*}
Based on  (\ref{eq2.1}),(\ref{eq2.2}) and (\ref{eq2.5}) we have
\begin{eqnarray*}
\Lambda_{1,\;1}\big({\bf\large R}\big)+\Lambda_{1,\;0}\big({\bf \large R}\big){\bf\large S}_1({\bf\large Q})&=&\sum_{1\leq i\leq d}R_i^*\Lambda_{0,\;1}
\big({\bf \large R} \big)R_i-\Lambda_{0,\;1}\big({\bf \large R} \big)+ \bigg(\sum_{1\leq i\leq d}R_i^*R_i-I\bigg){\bf\large S}_1\big({\bf\large Q}\big)
\\&=&\sum_{1\leq i\leq d}R_i^*\Lambda_{0,\;1}
\big({\bf \large R} \big)R_i-\Lambda_{0,\;1}\big({\bf \large R} \big)+ \sum_{1\leq i\leq d}R_i^*R_i{\bf\large S}_1\big({\bf\large Q}\big)-{\bf\large S}_1\big({\bf\large Q}\big)
\\&=& \sum_{1\leq i\leq d}R_i^*\Lambda_{0,\;1}
\big({\bf \large R} \big)R_i+ \sum_{1\leq i\leq d}R_i^*{\bf\large S}_1\big({\bf\large Q}\big)R_i-\Lambda_{0,\;1}\big({\bf \large R} \big)-{\bf\large S}_1\big({\bf\large Q}\big)
\\&=&\sum_{1\leq i\leq d}R_i^*\big(\Lambda_{0,\;1}\big({\bf\large R}\big)+{\bf\large S}_1\big({\bf\large Q}\big)\big)R_i-\bigg(\Lambda_{0,\;1}\big({\bf \large R} \big)+{\bf\large S}_1\big({\bf\large Q}\big)\bigg)
\\&=& \sum_{1\leq i\leq d}R_i^*\Lambda_{0,\;1}\big({\bf\large R+Q} \big)R_i-\Lambda_{0,\;1}\big({\bf\large R+Q} \big).
\end{eqnarray*}
\begin{eqnarray*}
&&\sum_{0\leq j\leq 1}\sum_{|\alpha|+|\gamma|+k =1}\binom{1}{j}\binom{1}{\alpha,\gamma,k}
({\bf \large R+Q})^{*\alpha} \cdot {\bf \large Q}^{*\gamma} \cdot {\Lambda}_{k,\;1-j}({\bf R}){\bf\large S}_{j}({\bf\large Q}){\bf\large R}^\alpha{\bf\large Q}^\gamma\\&=&\sum_{1\leq i\leq d}\big(R_i^*+Q_i^*\big)\Lambda_{0,\;1}\big({\bf\large R+Q}\big)Q_i+\sum_{1\leq i\leq d}+Q_i^*\Lambda_{0,\;1}\big({\bf\large R+Q}\big)R_i\\&&+ \sum_{1\leq i\leq d}R_i^*\Lambda_{0,\;1}\big({\bf\large R+Q} \big)R_i-\Lambda_{0,\;1}\big({\bf\large R+Q} \big)
\\&=&
\sum_{1\leq i\leq d}\big(R_i^*+Q_i^*\big)\Lambda_{0,\;1}\big({\bf\large R+Q}\big)Q_i+\sum_{1\leq i\leq d}+\big(R_i^*+Q_i^*\big)\Lambda_{0,\;1}\big({\bf\large R+Q}\big)R_i-\Lambda_{0,\;1}\big({\bf\large R+Q} \big)\\&=&
\sum_{1\leq i\leq d}\big(R_i^*+Q_i^*\big)\Lambda_{0,\;1}\big({\bf\large R+Q}\big)\big(R_i+Q_i\big)-\Lambda_{0,\;1}\big({\bf\large R+Q} \big)\\&=&
\Lambda_{1,\;1}\big({\bf\large R+Q}\big) \quad \quad (\hbox{by}\;\; (\ref{eq2.5})).
 \end{eqnarray*}
So the identity (\ref{eq2.8})  holds for $(m,n)=(1,1).$ Assume that the identity (\ref{eq2.8})  holds for $(m,1)$ and prove it for $(m+1,1).$
According to (\ref{eq2.5}) and the induction hypothesis we have
\begin{eqnarray*}
&&\Lambda_{m+1,\;1} \big({\bf\large R+Q}\big)=\sum_{1\leq i\leq d}\big(R_i^*+Q_i^*\big)\Lambda_{m,\;1}\big( {\bf\large R+Q}\big)\big(R_i+Q_i\big)-\Lambda_{m,\;1}\big({\bf\large R+Q}\big).
\\&=&\sum_{1\leq i\leq d}\big(R_i^*+Q_i\big)\bigg\{  \sum_{0\leq j\leq 1}\sum_{|\alpha|+|\gamma|+k =m}\binom{1}{j}\binom{m}{\alpha,\gamma,k}
({\bf \large R+Q})^{*\alpha} \cdot {\bf \large Q}^{*\gamma} \cdot {\Lambda}_{k,\;1-j}({\bf R}){\bf\large S}_{j}({\bf\large Q}){\bf\large R}^\gamma{\bf\large Q}^\alpha  \bigg\}\big(R_i+Q_i\big)\\&&-
\sum_{0\leq j\leq 1}\sum_{|\alpha|+|\gamma|+k =m}\binom{1}{j}\binom{m}{\alpha,\gamma,k}
({\bf \large R+Q})^{*\alpha} \cdot {\bf \large Q}^{*\gamma} \cdot {\Lambda}_{k,\;1-j}({\bf R}){\bf\large S}_{j}({\bf\large Q}){\bf\large R}^\gamma{\bf\large Q}^\alpha\\&=&
\sum_{0\leq j\leq 1}\sum_{|\alpha|+|\gamma|+k =m}\binom{1}{j}\binom{m}{\alpha,\gamma,k}
({\bf \large R+Q})^{*\alpha} \cdot {\bf \large Q}^{*\gamma} \cdot\bigg[\sum_{1\leq i\leq d} R_i^*{\Lambda}_{k,\;1-j}({\bf R})R_i- {\Lambda}_{k,\;1-j}({\bf R})\\&&+\sum_{1\leq i\le d}\bigg\{ R_i^*{\Lambda}_{k,\;1-j}({\bf R})Q_i+Q_i^*{\Lambda}_{k,\;1-j}({\bf R})R_i+Q_i^*{\Lambda}_{k,\;1-j}({\bf R})Q_i  \bigg\}\bigg]{\bf\large S}_{j}({\bf\large Q}){\bf\large R}^\gamma{\bf\large Q}^\alpha\\&=&
\sum_{0\leq j\leq 1}\sum_{|\alpha|+|\gamma|+k =m}\binom{1}{j}\binom{m}{\alpha,\gamma,k}
({\bf \large R+Q})^{*\alpha} \cdot {\bf \large Q}^{*\gamma} \cdot{\Lambda}_{k+1,\;1-j}({\bf R}){\bf\large S}_{j}({\bf\large Q}){\bf\large R}^\gamma{\bf\large Q}^\alpha\\&&+\sum_{0\leq j\leq 1}\sum_{|\alpha|+|\gamma|+k =m}\binom{1}{j}\binom{m}{\alpha,\gamma,k}
({\bf \large R+Q})^{*\alpha} \cdot {\bf \large Q}^{*\gamma}\bigg\{\\&& \big(R_i^*+Q_i^*\big){\Lambda}_{k,\;1-j}({\bf R})Q_i+Q_i^*{\Lambda}_{k,\;1-j}({\bf R})R_i \bigg\}\bigg]{\bf\large S}_{j}({\bf\large Q})\bigg\}{\bf\large R}^\gamma{\bf\large Q}^\alpha\\&=&
\sum_{0\leq j\leq 1}\sum_{|\alpha|+|\gamma|+k =m}\binom{1}{j}\binom{m}{\alpha,\gamma,k}
({\bf \large R+Q})^{*\alpha} \cdot {\bf \large Q}^{*\gamma} \cdot{\Lambda}_{k+1,\;1-j}({\bf R}){\bf\large S}_{j}({\bf\large Q}){\bf\large R}^\gamma{\bf\large Q}^\alpha\\&&+\sum_{0\leq j\leq 1}\sum_{|\alpha|+|\gamma|+k =m}\binom{1}{j}\binom{m}{\alpha,\gamma,k}
\sum_{1\leq i\leq d}({\bf \large R+Q})^{*\alpha} \big(R_i^*+Q_i^*\big)\cdot {\bf \large Q}^{*\gamma}{\Lambda}_{k,\;1-j}({\bf R})Q_i{\bf\large S}_{j}({\bf\large Q}){\bf\large R}^\gamma{\bf\large Q}^\alpha\\&&+
\sum_{0\leq j\leq 1}\sum_{|\alpha|+|\gamma|+k =m}\binom{1}{j}\binom{m}{\alpha,\gamma,k}\sum_{1\leq i\leq d}
({\bf \large R+Q})^{*\alpha} \cdot {\bf \large Q}^{*\gamma}
Q_i^*{\Lambda}_{k,\;1-j}({\bf R})R_i {\bf\large S}_{j}({\bf\large Q}){\bf\large R}^\gamma{\bf\large Q}^\alpha
\\&=&\sum_{0\leq j\leq 1}\sum_{|\alpha|+|\gamma|+k=m+1}\binom{1}{j}\binom{m+1}{\alpha,\gamma,k}({\bf \large R+Q})^{*\alpha} \cdot {\bf \large Q}^{*\gamma} \cdot {\Lambda}_{k,\;1-j}({\bf R}){\bf\large S}_{j}({\bf\large Q}){\bf\large R}^\gamma{\bf\large Q}^\alpha.
\end{eqnarray*}
So (\ref{eq2.8}) holds for $(m, 1)$ implies (\ref{eq2.8}) holds for $(m+1,1)$.\par \vskip 0.2 cm \noindent
Now assume that (\ref{eq2.8}) holds for $(m,n)$ and we prove that (\ref{eq2.8}) holds for $(m,n+1)$.\par \vskip 0.2 cm \noindent
According to (\ref{eq2.6}) and the induction hypothesis  we have
\begin{eqnarray*}
&&\Lambda_{m,\;n+1} \big({\bf\large R+Q}\big)\\&=&\sum_{1\leq i\leq d}\big(R_i^*+Q_i^*\big)\Lambda_{m,\;n}\big( {\bf\large R+Q}\big)-\sum_{1\leq i\leq d}\Lambda_{m,\;n}\big({\bf\large R+Q}\big)\big(R_i+Q_i\big)\\&=&
\sum_{1\leq i\leq d}\big(R_i^*+Q_i^*\big)\bigg[  \sum_{0\leq j\leq n}\sum_{|\alpha|+|\gamma|+k =m}\binom{n}{j}\binom{m}{\alpha,\gamma,k}
({\bf \large R+Q})^{*\alpha} \cdot {\bf \large Q}^{*\gamma} \cdot {\Lambda}_{k,\;n-j}({\bf R}){\bf\large S}_{j}({\bf\large Q}){\bf\large R}^\gamma{\bf\large Q}^\alpha \bigg]\\\end{eqnarray*}
\begin{eqnarray*}&&-
\sum_{1\leq i\leq d}\bigg[\sum_{0\leq j\leq n}\sum_{|\alpha|+|\gamma|+k =m}\binom{n}{j}\binom{m}{\alpha,\gamma,k}
({\bf \large R+Q})^{*\alpha} \cdot {\bf \large Q}^{*\gamma} \cdot {\Lambda}_{k,\;n-j}({\bf R}){\bf\large S}_{j}({\bf\large Q}){\bf\large R}^\gamma{\bf\large Q}^\alpha\bigg]\big(R_i+Q_i\big)\\&=&
\sum_{|\alpha|+|\gamma|+k =m}\binom{m}{\alpha,\gamma,k}({\bf \large R+Q})^{*\alpha} \cdot {\bf \large Q}^{*\gamma}\bigg[\mathbf{A}(m,n,{\bf\large R},\;{\bf\large Q}\big)\bigg]{\bf\large R}^\gamma{\bf\large Q}^\alpha,
\end{eqnarray*}
where
$$\mathbf{A}(m,n,{\bf\large R},\;{\bf\large Q}\big)=\sum_{0\leq j\leq n}\binom{n}{j}\sum_{1\leq i\leq d}\big(R_i^*+Q_i^*\big){\Lambda}_{k,\;n-j}({\bf R}){\bf\large S}_{j}({\bf\large Q})-\sum_{0\leq j\leq n}\binom{n}{j}
{\Lambda}_{k,\;n-j}({\bf R}){\bf\large S}_{j}({\bf\large Q})\sum_{1\leq i\leq d}\big(R_i+Q_i\big).$$
Based on (\ref{eq1.7}) and (\ref{eq2.6}) we have
\begin{eqnarray*}
\mathbf{A}(m,n,{\bf\large R},\;{\bf\large Q}\big)&=&\sum_{0\leq j\leq n}\binom{n}{j}\bigg\{ \sum_{1\leq i\leq d}R_i^*{\Lambda}_{k,\;n-j}({\bf R})-{\Lambda}_{k,\;n-j}({\bf R})\big(\sum_{1\leq i\leq d}R_i\big)\bigg\}{\bf\large S}_{j}({\bf\large Q})\\&&+
\sum_{0\leq j\leq n}\binom{n}{j}{\Lambda}_{k,\;n-j}({\bf R})\bigg\{ \big(\sum_{1\leq i\leq d}Q_i^*\big){\bf\large S}_{j}({\bf\large Q})-{\bf\large S}_{j}({\bf\large Q})\big(\sum_{1\leq i\leq d}Q_i\big)\bigg\}\\&=&
\sum_{0\leq j\leq n}\binom{n}{j}{\Lambda}_{k,\;n+1-j}({\bf R}){\bf\large S}_{j}({\bf\large Q})+   \sum_{0\leq j\leq n}\binom{n}{j}{\Lambda}_{k,\;n-j}({\bf R})   {\bf\large S}_{j+1}({\bf\large Q})\\&=&
{\Lambda}_{k,\;n+1}({\bf\large R})+\sum_{1\leq j\leq n}\bigg(\binom{n}{j}+\binom{n}{j-1}\bigg){\Lambda}_{k,\;n+1-j}({\bf\large R}){\bf\large S}_{j}({\bf\large Q})+
{\Lambda}_{k,\;0}({\bf\large R}){\bf\large S}_{n+1}({\bf\large Q})
\\&=&\sum_{0\leq j\leq n+1}\binom{n+1}{j}{\Lambda}_{k,\;n+1-j}({\bf R}){\bf\large S}_{j}({\bf\large Q}).
\end{eqnarray*}
By using the above relation we obtain
  $${\Lambda}_{m,\;n+1}({\bf \large R} + {\bf \large Q})=\sum_{0\leq j\leq n+1}\sum_{|\alpha|+|\gamma|+k =m}\binom{n+1}{j}\binom{m}{\alpha,\gamma,k}
({\bf \large R+Q})^{*\alpha} \cdot {\bf \large Q}^{*\gamma} \cdot {\Lambda}_{k,\;n+1-j}({\bf R}){\bf\large S}_{j}({\bf\large Q}){\bf\large R}^\gamma{\bf\large Q}^\alpha. $$ Consequently, the identity (\ref{eq2.8}) holds for all $m$ and $n$.This ends the proof.
\end{proof}
\par \vskip 0.2 cm \noindent Let ${\large \bf Q}=(Q_1,\cdots,Q_d)\in \mathcal{B}^{(d)}({\mathcal H})$ be a commuting multioperators. Recall that ${\large \bf Q}$ is said to be $p$-nilpotent, $p \in \mathbb{N}$,  if
${\bf \large Q}^\alpha=Q_1^{\alpha_1}\cdots Q_d^{\alpha_d}=0$ for all
$\alpha=(\alpha_1,\cdots,\alpha_d) \in \mathbb{N}_0^d$ with $|\alpha|=p$. (See \cite{GU2} ).\par\vskip 0.2cm\noindent
\begin{remark}
Let ${\large \bf Q}=(Q_1,\cdots,Q_d)$ be a commuting multioperators. If ${\large \bf Q}$ is nilpotent of order $k$,then ${\bf\large S}_r\big({\bf\large Q}\big)=0$ for all $r\geq 2k.$
\end{remark}
\begin{theorem}\label{thm1}
Let ${\large \bf R}=(R_1,\cdots,R_d) $ and ${\large \bf Q}=(Q_1,\cdots,Q_d)$ be two commuting multioperators for which  $\big[R_j, Q_i\big]=\big[R_j, Q_i^*\big]=0$ for all $j,i\in \{1,\cdots,d\;\}$. If ${\bf \large{ R}}$ is $(m,n)$-isosymmetric  multioperators and
${\bf \large{ Q}} $ is nilpotent of order $k$, then  ${\bf \large{R + Q}}$ is $(m + 2 k -2, n+2k-1)$-isosymmetric multioperators.
\end{theorem}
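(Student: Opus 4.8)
The plan is to derive the result directly from Proposition \ref{pro2.1}, applied with the shifted indices $M:=m+2k-2$ and $N:=n+2k-1$, followed by a short counting argument. Since $\big[R_j,Q_i\big]=\big[R_j,Q_i^{*}\big]=0$, Proposition \ref{pro2.1} gives
\[
\Lambda_{M,\,N}({\bf\large R}+{\bf\large Q})
=\sum_{0\leq j\leq N}\ \sum_{|\alpha|+|\gamma|+l=M}\binom{N}{j}\binom{M}{\alpha,\gamma,l}\,
({\bf\large R}+{\bf\large Q})^{*\alpha}\,{\bf\large Q}^{*\gamma}\,\Lambda_{l,\,N-j}({\bf\large R})\,{\bf\large S}_{j}({\bf\large Q})\,{\bf\large R}^{\gamma}\,{\bf\large Q}^{\alpha},
\]
so it suffices to show that every summand, indexed by $0\leq j\leq N$ and $|\alpha|+|\gamma|+l=M$, equals $0$.

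First I would record the obstructions forcing a given summand to vanish. Because ${\bf\large Q}$ is nilpotent of order $k$, one has ${\bf\large Q}^{\delta}=0$ for \emph{every} multi-index $\delta$ with $|\delta|\geq k$ — split $\delta=\delta'+\delta''$ with $|\delta''|=k$ and use commutativity — and dually ${\bf\large Q}^{*\delta}=0$ for $|\delta|\geq k$. Hence the factors ${\bf\large Q}^{\alpha}$ and ${\bf\large Q}^{*\gamma}$ being nonzero force $|\alpha|\leq k-1$ and $|\gamma|\leq k-1$. Next, the Remark preceding the theorem gives ${\bf\large S}_{j}({\bf\large Q})=0$ for $j\geq 2k$, so a nonzero summand has $j\leq 2k-1$. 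Finally, since ${\bf\large R}$ is $(m,n)$-isosymmetric, Corollary \ref{cor2.1} yields $\Lambda_{l,\,N-j}({\bf\large R})=0$ whenever $l\geq m$ and $N-j\geq n$; hence a nonzero summand must satisfy $l\leq m-1$ or $N-j\leq n-1$.

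The remaining step is the arithmetic that closes the argument. From $|\alpha|\leq k-1$ and $|\gamma|\leq k-1$ one gets
\[
l=M-|\alpha|-|\gamma|\ \geq\ (m+2k-2)-2(k-1)=m,
\]
so the alternative $l\leq m-1$ is impossible, and we are left with $N-j\leq n-1$, i.e. $j\geq N-n+1=(n+2k-1)-n+1=2k$. This contradicts $j\leq 2k-1$. Therefore no summand can be nonzero, $\Lambda_{m+2k-2,\,n+2k-1}({\bf\large R}+{\bf\large Q})=0$, and ${\bf\large R}+{\bf\large Q}$ is $(m+2k-2,\,n+2k-1)$-isosymmetric.

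I do not expect a serious obstacle once Proposition \ref{pro2.1} is in hand; the proof is then the elementary inequality chase above, and the three sources of slack — $|\alpha|+|\gamma|\leq 2k-2$, $j\leq 2k-1$, and the unit gap between $N-n$ and $N-n+1$ — are exactly balanced so that the shifts $m\mapsto m+2k-2$ and $n\mapsto n+2k-1$ make it close. The one point to state with care is that nilpotency of order $k$ annihilates ${\bf\large Q}^{\delta}$ for all $|\delta|\geq k$ (not only for $|\delta|=k$), which is precisely what lets one bound $|\alpha|$ and $|\gamma|$ rather than merely control the extreme multi-indices.
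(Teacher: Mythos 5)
Your proposal is correct and follows essentially the same route as the paper: apply Proposition \ref{pro2.1} at the shifted indices, then kill each summand using the nilpotency bounds on $|\alpha|$, $|\gamma|$, $j$ together with Corollary \ref{cor2.1}, exactly as in the paper's case analysis. Your explicit justification that order-$k$ nilpotency annihilates ${\bf\large Q}^{\delta}$ for \emph{all} $|\delta|\geq k$ is a small but welcome clarification of a step the paper leaves implicit.
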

\begin{proof} We need to show that $ {\Lambda}_{m+2q-2,\;n+2q-1}({\bf \large R} + {\bf \large Q})=0.$
According to $(\ref{eq2.5})$ we have
 \begin{eqnarray*}&&{\Lambda}_{m+2q-2,\;n+2q-1}({\bf \large R} + {\bf \large Q})=\\&&\sum_{0\leq j\leq n+2q-1}\sum_{|\alpha|+|\gamma|+k =m+2q-2}\binom{n+2q-1}{j}\binom{m+2q-2}{\alpha,\gamma,k}
({\bf \large R+Q})^{*\alpha} \cdot {\bf \large Q}^{*\gamma} \cdot {\Lambda}_{k,\;n+2k-1-j}({\bf R}){\bf\large S}_{j}({\bf\large Q}){\bf\large R}^\gamma{\bf\large Q}^\alpha.\end{eqnarray*}
\rm(i) If  $j\geq 2q$ or $\max\{ |\alpha|,\;|\gamma|\;\}\geq q$ we have  ${\bf\large S}_j\big({\bf\large Q}\big)=0$ or ${\bf\large Q}^{*\gamma}=0$ or
${\bf\large Q}^{\alpha}=0.$ \par \vskip 0.2 cm \noindent \rm(ii) If $j\leq 2q-1$ and  $\max\{ |\alpha|,\;|\gamma|\;\}\leq q-1$ we  have\par \vskip 0.2 cm \noindent
$k=m+2q-2-|\alpha|-|\gamma|\geq m$ and $n+2q-1-j\geq n $  and therefore ${\Lambda}_{k,\;n+2k-1-j}({\bf R})=0$ by Corollary \ref{cor2.1}.\par \vskip 0.2 cm \noindent
By combining \rm(i) and \rm(ii) we can conclude that   ${\Lambda}_{m+2q-2,\;n+2q-1}({\bf R})=0$.
\end{proof}

\par \vskip 0.2 cm \noindent
A particularly interesting consequences of Theorems \ref{thm1} are the following.

\begin{corollary}
Let ${\large \bf R}=(R_1,\dots,R_d)\in {\mathcal B}^{(d)}({\mathcal H})$  be an $(m,n)$-isosymetric commuting multioperators  and let ${\large \bf Q}=(Q_1,\cdots,Q_d)\in {\mathcal B}^{(d)}({\mathcal H})$ be a $q$-nilpotent commuting muoperators. Then\\ ${\bf\large R\otimes}{\bf\large I}+{\bf \large I}\otimes {\bf \large Q} :=(R_1\otimes I+I\otimes Q_1,\cdots ,R_d\otimes I+I\otimes Q_d)\in {\mathcal B}^{(d)}(\mathcal{H}\overline{\otimes}\mathcal{H})$ is an $(m+2q-2,n+2q-1)$-isosymmetric multioperators.
\end{corollary}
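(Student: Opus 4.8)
The plan is to reduce the tensor-sum statement to Theorem \ref{thm1} by exhibiting $\mathbf{R}\otimes\mathbf{I}$ and $\mathbf{I}\otimes\mathbf{Q}$ as commuting multioperators on $\mathcal H\,\overline\otimes\,\mathcal H$ that satisfy the hypotheses of that theorem. First I would set $\widetilde R_j=R_j\otimes I$ and $\widetilde Q_j=I\otimes Q_j$ for $1\le j\le d$, and write $\widetilde{\mathbf R}=(\widetilde R_1,\dots,\widetilde R_d)$, $\widetilde{\mathbf Q}=(\widetilde Q_1,\dots,\widetilde Q_d)$, so that $\mathbf R\otimes\mathbf I+\mathbf I\otimes\mathbf Q=\widetilde{\mathbf R}+\widetilde{\mathbf Q}$. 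The three things to verify are: (a) $\widetilde{\mathbf R}$ and $\widetilde{\mathbf Q}$ are each commuting tuples — this is immediate from commutativity of $\mathbf R$ (resp. $\mathbf Q$) and the tensor identity $(A\otimes I)(B\otimes I)=AB\otimes I$ (resp. on the second leg); (b) the cross-commutation $[\widetilde R_j,\widetilde Q_i]=[\widetilde R_j,\widetilde Q_i^*]=0$ for all $i,j$, which holds because operators on different tensor legs always commute, $(R_j\otimes I)(I\otimes Q_i)=R_j\otimes Q_i=(I\otimes Q_i)(R_j\otimes I)$, and likewise with $Q_i^*=(I\otimes Q_i)^*=I\otimes Q_i^*$; (c) that $(m,n)$-isosymmetry and $q$-nilpotency are each preserved under tensoring with the identity.

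For step (c), the key observation is that the defining operator pencils respect the substitution $R_j\mapsto R_j\otimes I$. Concretely, since $\sum_j \widetilde R_j^{*\gamma_j}$-type products and the sums $\sum_j\widetilde R_j$, $\sum_j\widetilde R_j^*$ all factor through the first leg, one has ${\bf M}_m(\widetilde{\mathbf R})={\bf M}_m(\mathbf R)\otimes I$ and ${\bf S}_n(\widetilde{\mathbf R})={\bf S}_n(\mathbf R)\otimes I$, hence $\Lambda_{m,n}(\widetilde{\mathbf R})=\Lambda_{m,n}(\mathbf R)\otimes I=0$ because $\mathbf R$ is $(m,n)$-isosymmetric. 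I would prove the identity ${\bf M}_m(\widetilde{\mathbf R})={\bf M}_m(\mathbf R)\otimes I$ by noting it for $m=0$ and then invoking the recursion \eqref{eq1.6}, which is compatible with $\otimes I$ since $\sum_j \widetilde R_j^*(X\otimes I)\widetilde R_j=(\sum_j R_j^* \,?\, R_j)\otimes I$ when $X=?\otimes$-nothing — more carefully, for $X=Y\otimes I$ one gets $\sum_j(R_j^*\otimes I)(Y\otimes I)(R_j\otimes I)-Y\otimes I=\big(\sum_j R_j^*YR_j-Y\big)\otimes I$. The same argument with recursion \eqref{eq1.7} handles ${\bf S}_n$. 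For nilpotency: if $\mathbf Q$ is $q$-nilpotent then for any $\alpha$ with $|\alpha|=q$, $\widetilde{\mathbf Q}^\alpha=(I\otimes Q_1^{\alpha_1})\cdots(I\otimes Q_d^{\alpha_d})=I\otimes\mathbf Q^\alpha=I\otimes 0=0$, so $\widetilde{\mathbf Q}$ is $q$-nilpotent.

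With (a), (b), (c) in hand, Theorem \ref{thm1} applied to $\widetilde{\mathbf R}$ and $\widetilde{\mathbf Q}$ yields that $\widetilde{\mathbf R}+\widetilde{\mathbf Q}=\mathbf R\otimes\mathbf I+\mathbf I\otimes\mathbf Q$ is $(m+2q-2,\,n+2q-1)$-isosymmetric, which is exactly the claim. I do not expect a genuine obstacle here; the only point requiring a little care is bookkeeping in step (c) — checking that every ingredient appearing in $\Lambda_{m,n}$ (the mixed sums $(R_1^*+\cdots+R_d^*)^k$, the multinomial block $\sum_{|\gamma|=k}\tfrac{k!}{\gamma!}\mathbf R^{*\gamma}\mathbf R^\gamma$, and the operators ${\bf S}_n$, ${\bf M}_m$) commutes past $\otimes I$ in the sense of the identity $f(\mathbf R)\otimes I=f(\mathbf R\otimes\mathbf I)$, which is routine but should be stated once cleanly rather than re-derived for each piece. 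An alternative, slightly slicker route is to observe directly from Proposition \ref{pro2.1} that $\Lambda_{m',n'}(\widetilde{\mathbf R}+\widetilde{\mathbf Q})$ expands into terms each containing a factor $\Lambda_{k,\ell}(\widetilde{\mathbf R})=\Lambda_{k,\ell}(\mathbf R)\otimes I$ or a factor $\widetilde{\mathbf Q}^{*\gamma},\widetilde{\mathbf Q}^\alpha,{\bf S}_j(\widetilde{\mathbf Q})$ that vanishes, reproducing the case analysis in the proof of Theorem \ref{thm1} verbatim; either way the corollary follows.
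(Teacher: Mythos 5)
Your proposal is correct and takes essentially the same route as the paper: it verifies that $\mathbf{R}\otimes\mathbf{I}$ and $\mathbf{I}\otimes\mathbf{Q}$ satisfy the commutation, $(m,n)$-isosymmetry, and $q$-nilpotency hypotheses of Theorem \ref{thm1} and then applies that theorem. The paper merely asserts these verifications as obvious, whereas you supply the details (in particular the identity $\Lambda_{m,n}(\mathbf{R}\otimes\mathbf{I})=\Lambda_{m,n}(\mathbf{R})\otimes I$), which is a welcome but not substantively different elaboration.
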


\begin{proof}
It is obviously that $\big[(R_k\otimes I), ( I\otimes N_j)\big]=\big[(R_k\otimes I), (I\otimes N_j)^*\big]=0$  for all $j,k=1,\cdots,d$.
Moreover, it is easy  to check that ${\bf \large R}\otimes {\bf \large I}=(R_1\otimes I,\;\cdots,R_d\otimes I)\in {\mathcal B}(\mathcal{H}^{(d)}\overline{\otimes}\mathcal{H})$ is an $(m, n)$-isosymmetric commuting multioperators and
${\bf \large I}\otimes {\bf \large Q}\in {\mathcal B}^{(d)}(\mathcal{H}\overline{\otimes}\mathcal{H})$ is  nilpotent commuting multioperators of order  $q$. Therefore  ${\bf \large R}\otimes {\bf \large I}$ and ${\bf\large I} \otimes {\bf \large Q}$ satisfy the conditions of Theorem \ref{thm1}. Consequently,  ${\bf\large R}{\otimes}{\bf\large I}+{\bf \large I}\otimes {\bf \large Q}$ is an $(m+2q-2,n+2q-1)$-isosymmetric multioperators.
\end{proof}
\begin{corollary} Let ${\bf\large A}=(A_1,\cdots,A_d)\in {\mathcal B}^{(d)}({\mathcal H})$ be an $(m,n)$-isomsymetric multioperators. If
${\bf\large B}=(B_1,\cdots,B_d)\in {\mathcal B}^{(d)}({\mathcal H})$ is defined by
$$B_k=\left(
    \begin{array}{cccc}
      A_k & \mu_k I & 0 &\cdots \\
      0 &  \ddots &  \ddots &\ddots \\
      \ddots &   \ddots &  \ddots  & \mu_kI \\
      0 &  \ddots  & 0& A_k \\
    \end{array}
  \right) \;\text{on}\;\;{\mathcal H}^{(q)}:={\mathcal H}\oplus\cdots \oplus{\mathcal H}
$$
  where $\mu_k \in \mathbb{C}$  for  $k = 1,\cdots, d $, then ${\bf \large B}$ is an
  $(m+2q-2, n+2q-1)$-isosymmetric multioperators.
\end{corollary}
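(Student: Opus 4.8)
The plan is to present ${\bf\large B}$ as a sum ${\bf\large R}+{\bf\large Q}$ meeting the hypotheses of Theorem \ref{thm1}. Identify ${\mathcal H}^{(q)}$ with ${\mathcal H}\,\overline{\otimes}\,\mathbb{C}^{q}$ and let $N\in{\mathcal B}(\mathbb{C}^{q})$ denote the nilpotent Jordan block sending the $j$-th coordinate vector to the $(j-1)$-st, so that $N^{q}=0$. Under this identification the block matrix defining $B_{k}$ reads
$$B_{k}=A_{k}\otimes I_{\mathbb{C}^{q}}+I_{\mathcal H}\otimes\mu_{k}N,\qquad k=1,\dots,d,$$
hence, setting ${\bf\large R}:={\bf\large A}\otimes{\bf\large I}=(A_{1}\otimes I,\dots,A_{d}\otimes I)$ and ${\bf\large Q}:=(I\otimes\mu_{1}N,\dots,I\otimes\mu_{d}N)$, we have ${\bf\large B}={\bf\large R}+{\bf\large Q}$.

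First I would check the algebraic requirements of Theorem \ref{thm1}. The tuple ${\bf\large R}$ is commuting since ${\bf\large A}$ is, and ${\bf\large Q}$ is commuting because its components are scalar multiples of the single operator $I\otimes N$. As ${\bf\large R}$ acts only on the first tensor leg while ${\bf\large Q}$ and ${\bf\large Q}^{\ast}$ act only on the second, one reads off $[R_{j},Q_{i}]=[R_{j},Q_{i}^{\ast}]=0$ for all $i,j$. Moreover ${\bf\large Q}$ is $q$-nilpotent: for $\alpha\in\mathbb{N}_{0}^{d}$ with $|\alpha|=q$ one has ${\bf\large Q}^{\alpha}=\mu_{1}^{\alpha_{1}}\cdots\mu_{d}^{\alpha_{d}}\,(I\otimes N^{|\alpha|})=\mu_{1}^{\alpha_{1}}\cdots\mu_{d}^{\alpha_{d}}\,(I\otimes N^{q})=0$.

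The one substantive step is to verify that ${\bf\large R}={\bf\large A}\otimes{\bf\large I}$ is $(m,n)$-isosymmetric; this is precisely the observation already invoked in the proof of the preceding corollary. Since $\sum_{k}R_{k}=\big(\sum_{k}A_{k}\big)\otimes I$ and $\frac{k!}{\gamma!}{\bf\large R}^{\ast\gamma}{\bf\large R}^{\gamma}=\frac{k!}{\gamma!}{\bf\large A}^{\ast\gamma}{\bf\large A}^{\gamma}\otimes I$, the defining building blocks satisfy ${\bf\large M}_{m}({\bf\large R})={\bf\large M}_{m}({\bf\large A})\otimes I$ and ${\bf\large S}_{n}({\bf\large R})={\bf\large S}_{n}({\bf\large A})\otimes I$, whence $\Lambda_{m,n}({\bf\large R})=\Lambda_{m,n}({\bf\large A})\otimes I=0$; equivalently, ${\bf\large A}\otimes{\bf\large I}$ is unitarily equivalent to the $q$-fold direct sum ${\bf\large A}\oplus\dots\oplus{\bf\large A}$, over which $\Lambda_{m,n}$ is additive. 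Granting this, Theorem \ref{thm1} applied to ${\bf\large R}$ and ${\bf\large Q}$ (with order of nilpotency $q$) yields that ${\bf\large B}={\bf\large R}+{\bf\large Q}$ is $(m+2q-2,n+2q-1)$-isosymmetric, as claimed.

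The only place calling for care is the factorization $\Lambda_{m,n}({\bf\large A}\otimes{\bf\large I})=\Lambda_{m,n}({\bf\large A})\otimes I$; once that is in hand the result is an immediate specialization of Theorem \ref{thm1}. Alternatively one may simply quote the preceding corollary with $\mathbb{C}^{q}$ in place of the second copy of ${\mathcal H}$ and the $q$-nilpotent commuting tuple $(\mu_{1}N,\dots,\mu_{d}N)\in{\mathcal B}^{(d)}(\mathbb{C}^{q})$, so that no genuinely new difficulty arises.
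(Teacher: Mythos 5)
Your proof is correct and follows essentially the same route as the paper: you split $B_k$ into its block-diagonal part plus the nilpotent superdiagonal part (your $A_k\otimes I + I\otimes\mu_k N$ is exactly the paper's $R_k+Q_k$ under the identification ${\mathcal H}^{(q)}\cong{\mathcal H}\,\overline{\otimes}\,\mathbb{C}^{q}$) and then invoke Theorem \ref{thm1}. The only difference is that you spell out the verification that $\Lambda_{m,n}({\bf\large A}\otimes{\bf\large I})=\Lambda_{m,n}({\bf\large A})\otimes I$, which the paper leaves as a ``direct computation.''
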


\begin{proof}  Obviously we have
$$B_k=\left(
    \begin{array}{cccc}
      A_k& 0 & 0 &\cdots \\
      0 &  \ddots &  \ddots &\ddots \\
      \ddots &   \ddots &  \ddots  & 0 \\
      0 &  \ddots  & 0& A_k\\
    \end{array}
  \right)
+\left(
    \begin{array}{cccc}
      0& \mu_kI & 0 &\cdots \\
      0 &  \ddots &  \ddots &\ddots \\
      \ddots &   \ddots &  \ddots  & \mu_kI \\
      0 &  \ddots  & 0& 0\\
    \end{array}
  \right)=R_k+Q_k\;\;\text{for}\;\;k=1,\cdots,d,$$ and so we my write
 ${\bf\large B}={\bf\large R}+{\bf \large Q}=(R_1+ Q_1,\cdots,R_d+Q_d).$ By Direct computations, we show that ${\bf\large R}$ is $(m,n)$-isosymmetic multioperators, ${\bf\large Q}$ is $q$-nilpotent and
 $$\big[R_k,Q_j\big]=\big[R_k,Q_j^*\big]=0,\;;\;\hbox{for}\;\;k,j\in \{1,\cdots,d\}.$$
  \noindent  According to Theorem \ref{thm1}, ${\bf \large B}$ is a $(m+2q-2, n+2q-1)$-isosymmetric multioperators.
\end{proof}

In \cite[Theorem 3.1]{BJZ} it has been proved that if $R\in {\mathcal B}({\mathcal H})$ is a strict $m$-isometry, then the list
of operators $\big\{ \;\displaystyle\sum_{0\leq j\leq l}(-1)^l\binom{k}{l}R^{*k-l}R^{k-l}, k = 0, 1,\;\cdots m - 1 \;\big\}$ is linearly
 independent. However in \cite[Theorem 4.1]{SH}  it has been proved that if $R$ is a strict $n$-symmetric operator,  then the list
of operators $$\big\{ \;\displaystyle\sum_{0\leq j\leq l}(-1)^l\binom{k}{l}R^{*l}R^{k-l}, k = 0, 1,\;\cdots n - 1 \;\big\},$$ is linearly
 independent.
\par \vskip 0.2 cm \noindent In the following proposition we extend these results to $m$-isometric and $n$-symmetric multioperatros.
\begin{proposition}
Let  ${\bf\large R}=(R_1,\cdots,R_d)\in {\mathcal B}^{(d)}({\mathcal H})$ be a commuting multioperators such that $\Lambda_{m-1,;n-1}({\bf\large R})\not=0$ for some positive integers $m\geq 2$ and  $n\geq2$. The following properties hold.\par \vskip 0.2 cm \noindent $1)$ If ${\bf\large R}$ is $m$-isometric multioperators,  then the list of operators
$$\{\;\Lambda_{k,\;n-1}({\bf\large R}), k = 0, 1,\;\cdots,m-1 \;\}$$ is linearly independent.
\par \vskip 0.2 cm \noindent $2)$ If ${\bf\large R}$ is $n$-symmetricc multioperators,  then the list of operators
$$\{\;\Lambda_{m-1,\;l}({\bf\large R}), l = 0, 1,\;\cdots,n-1 \;\}$$ is linearly independent.
\end{proposition}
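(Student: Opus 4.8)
The plan is to reduce both assertions to a single elementary fact about nilpotent linear maps, applied to two ``elementary operators'' acting on the algebra $\mathcal{B}(\mathcal{H})$.

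\textbf{Step 1 (a linear-algebra lemma).} First I would record the following: if $\mathcal{N}$ is a linear map on a vector space $V$, $v\in V$, and $p\geq 1$ is an integer with $\mathcal{N}^{p-1}v\neq 0$ and $\mathcal{N}^{p}v=0$, then $v,\mathcal{N}v,\dots,\mathcal{N}^{p-1}v$ are linearly independent. The proof is the usual descending argument: if $\sum_{k=0}^{p-1}c_k\mathcal{N}^{k}v=0$, apply $\mathcal{N}^{p-1}$ to obtain $c_0\mathcal{N}^{p-1}v=0$, so $c_0=0$; then apply $\mathcal{N}^{p-2}$ to get $c_1=0$, and continue down to $c_{p-1}=0$.

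\textbf{Step 2 (part 1).} Fix the second index at $n-1$ and put $\mathcal{T}(X)=\sum_{1\leq j\leq d}R_j^{*}XR_j-X$ for $X\in\mathcal{B}(\mathcal{H})$. Reading the recurrence (\ref{eq2.5}) with the first index running gives $\Lambda_{k+1,\,n-1}({\bf\large R})=\mathcal{T}\big(\Lambda_{k,\,n-1}({\bf\large R})\big)$, hence $\Lambda_{k,\,n-1}({\bf\large R})=\mathcal{T}^{k}\big(\Lambda_{0,\,n-1}({\bf\large R})\big)$ for every $k\geq 0$. Since ${\bf\large R}$ is $m$-isometric, ${\bf\large M}_m({\bf\large R})=0$, so in the form of $\Lambda_{m,\,n-1}({\bf\large R})$ written via ${\bf\large M}_m$ every summand contains the factor ${\bf\large M}_m({\bf\large R})$ and therefore $\Lambda_{m,\,n-1}({\bf\large R})=0$; that is, $\mathcal{T}^{m}\big(\Lambda_{0,\,n-1}({\bf\large R})\big)=0$, while $\mathcal{T}^{m-1}\big(\Lambda_{0,\,n-1}({\bf\large R})\big)=\Lambda_{m-1,\,n-1}({\bf\large R})\neq 0$ by hypothesis. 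Step~1, with $\mathcal{N}=\mathcal{T}$, $v=\Lambda_{0,\,n-1}({\bf\large R})$ and $p=m$, yields the linear independence of $\{\Lambda_{k,\,n-1}({\bf\large R}):0\leq k\leq m-1\}$.

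\textbf{Step 3 (part 2).} Symmetrically, fix the first index at $m-1$ and put $\mathcal{D}(X)=\sum_{1\leq j\leq d}\big(R_j^{*}X-XR_j\big)$. The recurrence (\ref{eq2.6}) with the second index running gives $\Lambda_{m-1,\,l+1}({\bf\large R})=\mathcal{D}\big(\Lambda_{m-1,\,l}({\bf\large R})\big)$, hence $\Lambda_{m-1,\,l}({\bf\large R})=\mathcal{D}^{l}\big(\Lambda_{m-1,\,0}({\bf\large R})\big)$. Since ${\bf\large R}$ is $n$-symmetric, ${\bf\large S}_n({\bf\large R})=0$, and in the form of $\Lambda_{m-1,\,n}({\bf\large R})$ written via ${\bf\large S}_n$ every summand contains ${\bf\large S}_n({\bf\large R})$, so $\mathcal{D}^{n}\big(\Lambda_{m-1,\,0}({\bf\large R})\big)=\Lambda_{m-1,\,n}({\bf\large R})=0$, whereas $\mathcal{D}^{n-1}\big(\Lambda_{m-1,\,0}({\bf\large R})\big)=\Lambda_{m-1,\,n-1}({\bf\large R})\neq 0$. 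Applying Step~1 with $\mathcal{N}=\mathcal{D}$ and $p=n$ completes the argument.

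\textbf{Where the work is.} The combinatorial heart is already packaged in the recurrences (\ref{eq2.5})--(\ref{eq2.6}); the only points to check carefully are the two ``realisation'' observations, namely that $k\mapsto\Lambda_{k,\,n-1}({\bf\large R})$ and $l\mapsto\Lambda_{m-1,\,l}({\bf\large R})$ are genuine orbits of the single maps $\mathcal{T}$ and $\mathcal{D}$, together with the two extremal facts that the isometry (resp. symmetry) hypothesis annihilates the orbit exactly at step $m$ (resp. $n$) while the standing assumption $\Lambda_{m-1,\,n-1}({\bf\large R})\neq 0$ prevents it from terminating earlier. Once these are in place, nothing beyond Step~1 is needed and no further manipulation of multinomial coefficients arises.
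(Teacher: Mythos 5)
Your proof is correct and is essentially the paper's argument: the paper also applies the recurrences (\ref{eq2.5})--(\ref{eq2.6}) repeatedly to a hypothetical linear relation, uses the $m$-isometry (resp.\ $n$-symmetry) hypothesis to kill $\Lambda_{j,\,n-1}$ for $j\geq m$ (resp.\ $\Lambda_{m-1,\,j}$ for $j\geq n$), and peels off the coefficients one at a time using $\Lambda_{m-1,\,n-1}({\bf\large R})\neq 0$. Your only departure is cosmetic but welcome: packaging the repeated application as powers of the elementary operators $\mathcal{T}$ and $\mathcal{D}$ and invoking the standard nilpotent-orbit lemma makes the induction the paper carries out by hand into a one-line citation.
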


\begin{proof} (1)  The proof  of the statement (1) is essentially
based on the multiple use of the following identity
\begin{equation}\label{eq2.3}
\Lambda_{m+1,\;n} \big({\bf\large R})\big)=\sum_{1\leq j\leq d}R_j^*\Lambda_{m,\;n}\big( {\bf\large R}\big)R_j-\Lambda_{m,\;n}\big({\bf\large R}\big).
\end{equation}
 Assume that
$$\sum_{0\leq k \leq m-1}a_k\Lambda_{k,\;n-1}\big({\bf\large R}\big)=0,$$
for some complex numbers $a_k.$
 Multiplying the above equation on the left by $R_i^*$ and right by $R_i$  for $i=1,\cdots,d $  we obtain the following relation
 $$\sum_{0\leq k \leq m-1}a_k \bigg(\sum_{1\leq i\leq i\leq d}R_i^*\Lambda_{k,\;n-1}\big({\bf\large R}\big)R_i\bigg)=0$$ and subtracting two
equations, we have
$$\sum_{0\leq k\leq m-1}a_k\bigg(\sum_{1\leq i\leq d}R_i^*\Lambda_{k,\;n-1}\big({\bf\large R}\big)R_i-\Lambda_{k,\;n-1}\big({\bf\large R}\big)\bigg)=\sum_{0\leq k \leq m-1}a_k\Lambda_{k+1,\;n-1}\big({\bf\large R}\big)=0$$
From an argument similar to the above applied to the equation $$\displaystyle\sum_{0\leq k \leq m-1}a_k\Lambda_{k+1,\;n-1}\big({\bf\large R}\big)=0,$$ we get
$$\sum_{0\leq k \leq m-1}a_k\Lambda_{k+2,\;n-1}\big({\bf\large R}\big)=0.$$   Following  the same steps, we  can obtain
 $$\sum_{0\leq k \leq m-1}a_k\Lambda_{k+l,\;n-1}\big({\bf\large R}\big)=0\;\;\;\text{for all}\;\;l\in \mathbb{N}.$$
In the case that ${\bf\large R}$ is $m$-isometric multioperators, it follows that   $\Lambda_{j,\;n-1}\big({\bf\large R}\big)=0$ for all $j\geq m$.
 By considering   the following cases, we get
\par \vskip 0.2 cm \noindent For $l=m-1$,
 $\displaystyle\sum_{0\leq k \leq m-1}a_k\Lambda_{k+l,\;n-1}\big({\bf\large R}\big)=0 \Rightarrow a_0\Lambda_{m-1,\;n-1}\big({\bf\large R}\big)=0,$ So we have that $a_0=0.$ \par \vskip 0.2 cm \noindent
 For $l=m-2$, $\displaystyle\sum_{0\leq k \leq m-1}a_k\Lambda_{k+l,\;n-1}\big({\bf\large R}\big)=0 \Rightarrow a_1\Lambda_{m-1,\;n-1}\big({\bf\large R}\big)=0,$ So we have that $a_1=0.$ \par \vskip 0.2 cm \noindent Continuing this process we see that all $a_k = 0$ for $k=2,\;\cdots,\;m-1$. Hence the result is proved.
 \par \vskip 0.2 cm \noindent (2) The proof is similar to the above one by using the identity
 \begin{equation}\label{eq2.4}
\Lambda_{m,\;n+1} \big({\bf\large R})\big)=\sum_{1\leq j\leq d}R_j^*\Lambda_{m,\;n}\big( {\bf\large R}\big)-\sum_{1\leq j\leq d}\Lambda_{m,\;n}\big({\bf\large R}\big)R_j.
\end{equation}
\end{proof}

\section{spectral properties of $(m,n)$-isosymmetric multioperators}
\

\

\
Let ${\bf \large R}=(R_1,\cdots,R_d) \in {\mathcal B}^{(d)}({\mathcal H})$ be a commuting multioperators.  Following \cite{kkmm}, the authors noted that  \par \vskip 0.2 cm \noindent
{\em (1)} A point $\mu=(\mu_1,\cdots,\mu_d)\in \mathbb{C}^d$
is in the  joint  eigenvalue of ${\bf \large R}$ if there exists a nonzero
vector $u\in \mathcal{H}$ such that
$$\big(R_l-\mu_l\big)u=0\;\;\hbox{for all }\;\;l=1,2,\cdots,d.$$
 The joint point spectrum, denoted by $\sigma_p({\bf \large
R})$  of ${\bf \large R}$ is the set of all joint eigenvalues of
${\bf \large R},$
that is, $$\sigma_p({\bf\large
R})=\big\{\mu=(\mu_1,\cdots,\mu_d) \in \mathbb{C}^d:\;\bigcap_{1 \leq l \leq
d}\mathcal{N}(R_l-\mu_l)\not=\{0\} \big\}.$$
\noindent {\em (2)} A point
$\mu=(\mu_1,\cdots,\mu_d)\in \mathbb{C}^d$ is in
the joint approximate point spectrum $\sigma_{ap}({\bf \large R})$
if and only if there exists a sequence $\{u_k \}_k \subset {\mathcal H}$ such that $\|u_k\|=1$ and
$$\big(R_l -\mu_l \big) u_k \longrightarrow 0  \;\;\hbox{as}
\;\;k\longrightarrow \infty \;\;\hbox{for every }\;\;l=1,\cdots,d.
$$
For additional information on these concepts, see \cite{CM,MC}.\par \vskip 0.2 cm \noindent
In the following results we examine some spectral
properties of an $(m,n)$ isosymmetric commuting multioperators. That
extend the cases of  $m$-isometries and $n$-symmetric multioperators studied in
\cite{GR} and \cite{CS}.\par\vskip 0.2 cm  \noindent We put
$$\mathbb{B}(\mathbb{C}^d):=\{ \mu=(\mu_1,\cdots,\mu_d)\in
\mathbb{C}^d \; \; \|\mu\|_2=\bigg(\sum_{1\leq l \leq
d}|\mu_l|^2\bigg)^{\frac{1}{2}}<1\;\}$$  and
$$\partial\mathbb{B}(\mathbb{C}^d):=\{
\mu=(\mu_1,\cdots,\mu_d)\in \mathbb{C}^d \;/ \;
\|\mu\|_2 =\bigg(\sum_{1\leq l \leq
d}|\mu_l|^2\bigg)^{\frac{1}{2}} =1\;\}$$\par\vskip 0.2 cm
\noindent

In ( \cite{GR}, Lemma 3.2), the authors have proved that if ${\bf \large R}$
is an $m$-isometric multioperators, then the joint approximate point spectrum
of ${\bf\large R}$ is in the boundary of the unit ball
$\mathbb{B}(\mathbb{C}^d)$. However in \cite[Theorem 4.1]{CS}, the authors have proved that if ${\bf \large R}$
is an $n$-symmetric multioperators, then the joint approximate point spectrum
of ${\bf\large R}$ is in the set
$\bigg\{  (\mu_1,\cdots,\mu_d) \in \mathbb{C}^d\;\;/ \;\displaystyle\sum_{1\leq l\leq d}\mu_l \in \mathbb{R}\;\bigg\}.$

\begin{proposition}
Let ${\bf \large R}=(R_1,\cdots,R_d)\in {\mathcal B}^{(d)}({\mathcal H})$ be a commuting multioperators and  $\mu=(\mu_1,\cdots,\mu_d)\in \mathbb{C}^d$. If ${\bf\large R}$ is an  $(m,n)$-isosymmetric and $\displaystyle\sum_{1\leq k\leq d}\mu_k\notin \sigma_{ap}\big( \sum_{1\leq k\leq d}R_k^*\big)$, then the following property
holds
$$ \big[0 \big] \cap \sigma_{ap}({\bf\large R})=\emptyset,$$ where $$\big[0
\big]:=
 \big\{ (\mu_1,\cdots,\mu_d) \in
\mathbb{C}^d:\;\prod_{1\leq l\leq d}\mu_l=0\;\big\}.$$
\par\vskip 0.2 cm
\end{proposition}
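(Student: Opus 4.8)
I would prove the contrapositive: assuming some point $\mu=(\mu_1,\dots,\mu_d)\in[0]$ lies in $\sigma_{ap}(\mathbf R)$, I will show $s:=\mu_1+\cdots+\mu_d\in\sigma_{ap}\big(\sum_{l}R_l^{\ast}\big)$, which the hypothesis forbids. Fix a unit sequence $(x_j)$ with $(R_l-\mu_l)x_j\to 0$ for every $l$; a one-line induction gives $\mathbf R^{\gamma}x_j-\mu^{\gamma}x_j\to 0$ for all $\gamma\in\mathbb N_0^{d}$ (with $\mu^{\gamma}:=\mu_1^{\gamma_1}\cdots\mu_d^{\gamma_d}$) and $S^{p}x_j-s^{p}x_j\to 0$ for all $p$, where $S:=R_1+\cdots+R_d$. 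Since $\mu\in[0]$, fix $l_0$ with $\mu_{l_0}=0$; then $R_{l_0}x_j\to 0$. Now substitute $x_j$ into $\Lambda_{m,n}(\mathbf R)=0$ in its first form $\Lambda_{m,n}(\mathbf R)=\sum_{k=0}^{n}(-1)^{n-k}\binom nk(S^{\ast})^{k}\mathbf M_m(\mathbf R)S^{\,n-k}$: since $S^{\,n-k}x_j-s^{\,n-k}x_j\to 0$ and each $(S^{\ast})^{k}\mathbf M_m(\mathbf R)$ is bounded, replacing $S^{\,n-k}x_j$ by $s^{\,n-k}x_j$ and collecting the finite sum yields
$$(S^{\ast}-sI)^{n}\,\mathbf M_m(\mathbf R)\,x_j\longrightarrow 0 .$$

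Next split into two cases. If $\mathbf M_m(\mathbf R)x_j$ does not tend to $0$, pass to a subsequence with $\|\mathbf M_m(\mathbf R)x_j\|\ge\delta>0$ and normalize to unit vectors $z_j$; then $(S^{\ast}-sI)^{n}z_j\to 0$, so $(S^{\ast}-sI)^{n}$, and hence $S^{\ast}-sI$, fails to be bounded below, i.e. $s\in\sigma_{ap}(S^{\ast})$ and we are done. In the remaining case $\mathbf M_m(\mathbf R)x_j\to 0$, the hypothesis $\mu\in[0]$ enters decisively: in $\mathbf M_m(\mathbf R)x_j=\sum_{k}(-1)^{m-k}\binom mk\sum_{|\gamma|=k}\tfrac{k!}{\gamma!}\mathbf R^{\ast\gamma}\mathbf R^{\gamma}x_j$ every term with $\gamma_{l_0}\ge 1$ tends to $0$ (because $\mathbf R^{\gamma}x_j\to\mu^{\gamma}x_j=0$), while for $\gamma_{l_0}=0$ one has $\mathbf R^{\ast\gamma}\mathbf R^{\gamma}x_j-\mu^{\gamma}\mathbf R^{\ast\gamma}x_j\to 0$; since $\sum_{|\gamma|=k,\,\gamma_{l_0}=0}\tfrac{k!}{\gamma!}\mu^{\gamma}\mathbf R^{\ast\gamma}=P^{k}$ with $P:=\sum_{l\ne l_0}\mu_l R_l^{\ast}$, this gives $(P-I)^{m}x_j\to 0$. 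Pairing with $x_j$ and using $\langle P^{k}x_j,x_j\rangle\to\|\mu\|_2^{2k}$ (from $\langle\mathbf R^{\ast\gamma}x_j,x_j\rangle\to\overline{\mu^{\gamma}}$ and the multinomial theorem) forces $(\|\mu\|_2^{2}-1)^{m}=0$, i.e. $\|\mu\|_2=1$; consequently $P^{\ast}x_j=\sum_{l\ne l_0}\overline{\mu_l}R_lx_j\to\|\mu\|_2^{2}x_j=x_j$, so also $(P^{\ast}-I)x_j\to 0$.

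The main obstacle is finishing this degenerate case, since $(P-I)^{m}x_j\to 0$ does not by itself descend to $(P-I)x_j\to 0$ or yield an approximate eigenvector for $S^{\ast}-sI$. To close it I would use that the $R_l^{\ast}$ commute pairwise (hence $P$ commutes with $S^{\ast}$ and $P^{\ast}$ commutes with $S$), together with the recursions $\Lambda_{m,n}(\mathbf R)=S^{\ast}\Lambda_{m,n-1}(\mathbf R)-\Lambda_{m,n-1}(\mathbf R)S$ and $\Lambda_{m,n}(\mathbf R)=\sum_j R_j^{\ast}\Lambda_{m-1,n}(\mathbf R)R_j-\Lambda_{m-1,n}(\mathbf R)$ and Corollary~\ref{cor2.1}, to combine $(S^{\ast}-sI)^{n}(P-I)^{m}x_j\to 0$ with $(P^{\ast}-I)x_j\to 0$ and $R_{l_0}x_j\to 0$ and extract a unit sequence along which $S^{\ast}-sI\to 0$. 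When $d=1$ this last step is immediate: there $P=0$, so Step~1 already forces $(S^{\ast})^{n}x_j\to 0$, whence $0=s\in\sigma_{ap}(S^{\ast})$ and the statement collapses to $0\notin\sigma_{ap}(R)$.
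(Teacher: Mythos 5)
Your reduction to the contrapositive, the limit $\mathbf{R}^{\gamma}x_j-\mu^{\gamma}x_j\to 0$, and your Case A are all correct, and Case A is essentially the paper's own argument with the two expansions of $\Lambda_{m,n}(\mathbf{R})$ interchanged: the paper feeds the approximate eigenvectors into $\sum_{j}(-1)^{m-j}\binom{m}{j}\sum_{|\gamma|=j}\tfrac{j!}{\gamma!}\mathbf{R}^{*\gamma}\mathbf{S}_n(\mathbf{R})\mathbf{R}^{\gamma}$, argues that every term with $|\gamma|\ge 1$ tends to $0$, deduces $\mathbf{S}_n(\mathbf{R})u_k\to 0$, hence $\big(\sum_l\mu_l-\sum_l R_l^{*}\big)^{n}u_k\to 0$, and contradicts the hypothesis that $\sum_l\mu_l\notin\sigma_{ap}\big(\sum_l R_l^{*}\big)$ via boundedness below of the $n$-th power. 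Your version with $\mathbf{M}_m$ in the middle reaches the same contradiction whenever $\mathbf{M}_m(\mathbf{R})x_j\not\to 0$.

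Case B, however, is a genuine gap and not a detail: your closing paragraph is a plan, not an argument. From $(P-I)^{m}x_j\to 0$, $\|\mu\|_2=1$ and $(P^{*}-I)x_j\to 0$ (all of which you derive correctly) there is no visible route to a unit sequence annihilated asymptotically by $S^{*}-sI$; the recursions \eqref{eq2.5}--\eqref{eq2.6} and commutativity give relations between the $\Lambda_{k,l}(\mathbf{R})$, not a way to ``descend'' from $(S^{*}-sI)^{n}(P-I)^{m}x_j\to 0$ to an approximate eigenvector of $S^{*}$ at $s$. As written, your argument proves the proposition only when $\mathbf{M}_m(\mathbf{R})x_j\not\to 0$ along the chosen sequence, or when $d=1$, or when every coordinate of $\mu$ vanishes. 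It is worth noting that the difficulty is not an artifact of your route: the paper's proof disposes of the terms with $|\gamma|\ge 1$ by replacing $\mathbf{R}^{\gamma}u_k$ with $(\mathbf{R}^{\gamma}-\mu^{\gamma})u_k$, which silently assumes $\mu^{\gamma}=0$ for every such $\gamma$; since $\mu\in[0]$ only forces one coordinate $\mu_{l_0}$ to vanish, the multiindices supported off $l_0$ survive and contribute exactly the operator $P=\sum_{l\neq l_0}\mu_l R_l^{*}$ that obstructs you. So you have isolated a real issue, but you have not resolved it, and the proposal is therefore incomplete.
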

\begin{proof} Assume that $ \big[0 \big] \cap \sigma_{ap}({\bf R}) \not=\emptyset$ and  let  $\mu=(\mu_1,\cdots,\mu_d)\in  \big[0 \big] \cap  \sigma_{ap}({\bf \large R})$, then there exists a
sequence $(u_k)_{k\geq 1} \subset \mathcal{H}$, with $||u_k||=1$
such that $\big({ R_l}-\mu_l I)u_k \longrightarrow 0$ for all
$l=1,2,\cdots,d$. Since for $\gamma_l>1$,
$$R_j^{\gamma_l}-\mu_l^{\gamma_l}=\big(R_l-\mu_l\big)\sum_{1\leq k\leq \mu_l}\mu_l^{k-1}R_l^{\mu_l-k}$$ By induction, for  $\gamma=(\gamma_1,\cdots,\gamma_d)
\in \mathbb{N}_0^d$, we have $$ ({\bf\large R}^\gamma-\mu^\gamma I)=
\sum_{1\leq k\leq d}\bigg( \prod_{i\leq
k}\mu_i^{\gamma_i}\bigg)\bigg(
R_k^{\gamma_k}-\mu_k^{\gamma_k}\bigg)\prod_{i>k}R_i^{\gamma_i}.$$
We deduce that
$ \big({\large \bf R}^\gamma-\mu^\gamma I\big)u_k \to 0$ as $k
\longrightarrow \infty$ for all $\gamma=(\gamma_1,\cdots,\gamma_D)\in \mathbb{N}_0^d$.
Since $\Lambda_{m,n}\big({\bf \large R}\big)=0$, it follows that
\begin{eqnarray*}
0&=&\sum_{0\leq j\leq m}(-1)^{m-j}\binom{m}{j}\bigg(\sum_{|\gamma|=j}\frac{j!}{\gamma!}{\bf \large R}^{\ast \gamma}{\bf\large S}_n\big({\bf \large R}\big){\bf \large R}^\gamma\bigg)  u_k\\&=&
{\bf\large S}_n\big({\bf \large R}\big)u_k+
\sum_{1\leq j\leq m}(-1)^{m-j}\binom{m}{j}\bigg(\sum_{|\gamma|=j}\frac{j!}{\gamma!}{\bf \large R}^{\ast \gamma}{\bf\large S}_n\big({\bf \large R}\big){\bf \large R}^\gamma\bigg)  u_k\\&=&
{\bf\large S}_n\big({\bf \large R}\big)u_k+
\sum_{1\leq j\leq m}(-1)^{m-j}\binom{m}{j}\bigg(\sum_{|\gamma|=j}\frac{j!}{\gamma!}{\bf \large R}^{\ast \gamma}{\bf\large S}_n\big({\bf \large R}\big)\big({\bf \large R}^\gamma -\mu^\gamma\big)\bigg)  u_k\\&=&
\end{eqnarray*}
By taking $k\to \infty$ we get $\displaystyle\lim_{k\to \infty}{\bf\large S}_n\big({\bf \large R}\big)u_k=0$

\begin{eqnarray*}
\displaystyle\lim_{k\to \infty}{\bf\large S}_n\big({\bf \large R}\big)u_k=0&\Rightarrow& \lim_{k\to \infty}\bigg( \sum_{0\leq j\leq n}(-1)^j\binom{n}{j}\big(R_1^*+\cdots +R_d^*\big)^j\big(R_1+\cdots+R_d\big)^{n-j}u_k   \bigg)=0
\\&\Rightarrow&\lim_{k\to \infty}\big(\mu_1+\cdots+\mu_d-R_1^*-\cdots-R_d^*\big)^nu_k=0.
\end{eqnarray*}

If $\big(\mu_1+\cdots+\mu_d-R_1^*-\cdots-R_d^*\big)$ is bounded from below, then so is  $\big(\mu_1+\cdots+\mu_d-R_1^*-\cdots-R_d^*\big)^n$ is bounded from below
and therefore
$$\|\big(\mu_1+\cdots+\mu_d-R_1^*-\cdots-R_d^*\big)^nu\|\geq C\|u\|,$$ for some constant $C>0$ and all $u\in {\mathcal H}.$
In particular,$$\|\big(\mu_1+\cdots+\mu_d-R_1^*-\cdots-R_d^*\big)^nu_k\|\geq C\|u_k\|=C.$$
If $k\to \infty$ we get $C=0$ which is a contradiction.
\begin{theorem}
Let  ${\bf \large R}=(R_1,\cdots,R_d)\in {\mathcal B}^{(d)}({\mathcal H})$ be an $(m,n)$-isosymmetric multioperators. then the following properties
hold.\par\vskip 0.2 cm
\noindent
$(1)$ $\sigma_{ja}({\bf\large R}) \subset \partial\mathbb{B}(\mathbb{C}^d) \bigcup \bigg\{  (\mu_1,\cdots,\mu_d) \in \mathbb{C}^d\;\;/ \;\displaystyle\sum_{1\leq k\leq d}\mu_k \in \mathbb{R}\;\bigg\}.$ \par \vskip 0.2 cm \noindent
$(2)$ $\sigma_{jp}({\bf\large R}) \subset \partial\mathbb{B}(\mathbb{C}^d) \bigcup \bigg\{  (\mu_1,\cdots,\mu_d) \in \mathbb{C}^d\;\;/ \;\displaystyle\sum_{1\leq k\leq d}\mu_k \in \mathbb{R}\;\bigg\}.$
\par \vskip 0.2 cm \noindent $(3)$  Let  $ \mu =\big(\mu_1,\cdots,\mu_d\big)$ and $
\mu^\prime=\big(\mu_1^\prime,\cdots,\mu_d^\prime \big) \in \sigma_{ja}({\bf\large R})$  such that
$\displaystyle\sum_{1\leq j\leq d}\mu_j\overline{\mu_j^\prime}\not=1$ and $\displaystyle\sum_{1\leq j\leq d}\big(\mu_j-\overline{\mu_j^\prime}\big)\not=0$. If
 $\{u_k\}_k$ and  $\{v_k\}_k$ are two sequences of unit vectors in ${\mathcal H}$ such that
$\|(R_j-\mu_j) u_k\|\longrightarrow 0 \;\hbox{and} \; \|(R_j-\mu_j^\prime)v_k\| \longrightarrow 0\;\;(\hbox{as}\;\;
k \longrightarrow \infty) \;\;\hbox{for}\;\;j=1,\cdots,d ,$ then
$$
\langle u_k |\; v_k\rangle \longrightarrow 0  \;\; (\hbox{as}\;\;
k\longrightarrow \infty).
$$
 \par \vskip 0.2 cm \noindent
 $(4)$  Let  $ \mu =(\mu_1,\cdots,\mu_d)$ and $
\mu^\prime=\big(\mu_1^\prime,\cdots,\mu_d^\prime\big) \in \sigma_{jp}({\bf\large R})$  such that
$$\displaystyle\sum_{1\leq j\leq d}\mu_j\mu_j^\prime\not=1\;\hbox{and}\; \;\displaystyle\sum_{1\leq j\leq d}\big(\mu_j-\overline{\mu_j^\prime}\big)\not=0.$$ If $\big(R_j-\lambda_j\big)u=0$ and $\big(R_j-\mu_j^\prime\big)v=0$ for $j=1,\cdots,d$, then
 $$\left\langle x\;|\;y\right\rangle=0.$$
\end{theorem}
\medskip
\end{proof}
\begin{proof}
(1)  Let $\mu= (\mu_1,\cdots,\mu_d)\in \sigma_{ja}({\bf \large R})$,
and  $\{u_k \}_{k}\subset\mathcal{H},$ such that
$\|u_k\|=1$ and  $\big(R_l-\mu_l\big ) u_k\longrightarrow 0$ for all
$l=1,\cdots,d$.  It is easy to see that
for all $\gamma_j\geq 0$ we have
$\big(R_j^{\gamma_j}-\mu_j^{\gamma_j})u_k\longrightarrow 0$ and
$\big({\bf\large R}^\gamma-\mu^\gamma\big)u_k\longrightarrow 0$  as
$k\longrightarrow \infty$. \par \vskip 0.2 cm \noindent
In the case that  ${\bf\large R}$ is an $(m,n)$-isosymmetric multioperators, then we have
\begin{eqnarray*}
0&=&\sum_{0\leq j\leq m}(-1)^{m-j}\binom{m}{j}\bigg(\sum_{|\gamma|=j}\frac{j!}{\gamma!}{\bf \large R}^{\ast \gamma}{\bf\large S}_n\big({\bf \large R}\big){\bf \large R}^\gamma\bigg)  u_k
\end{eqnarray*}
and so we may write \begin{eqnarray*}
0&=& \bigg\langle\sum_{0\leq j\leq m}(-1)^{m-j}\binom{m}{j}\bigg(\sum_{|\gamma|=j}\frac{j!}{\gamma!}{\bf \large R}^{\ast \gamma}{\bf\large S}_n\big({\bf \large R}\big){\bf \large R}^\gamma\bigg)  u_k\;|\;u_k\bigg\rangle\\&=&\sum_{0\leq j\leq m}(-1)^{m-j}\binom{m}{j}\sum_{|\gamma|=j}\frac{j!}{\gamma!}
\bigg\langle{\bf\large S}_n\big({\bf \large R}\big){\bf \large R}^\gamma  u_k\;|\;{\bf \large R}^{ \gamma}u_k\bigg\rangle
\\&=&\sum_{0\leq j\leq m}(-1)^{m-j}\binom{m}{j}\sum_{|\gamma|=j}\frac{j!}{\gamma!}
\bigg\langle{\bf\large S}_n\big({\bf \large R}\big)\bigg( \big({\bf \large R}^\gamma -\mu^\gamma\big) u_k+\mu^\gamma u_k\bigg)\;|\;\big({\bf \large R}^{ \gamma}-\mu^\gamma u_k\big)+\mu^\gamma u_k\bigg\rangle,
\end{eqnarray*}
which implies that
\begin{eqnarray*} &&\sum_{0\leq j\leq m}(-1)^{m-j}\binom{m}{j}\sum_{|\gamma|=j}\frac{j!}{\gamma!} |\mu|^{2\gamma}\lim_{k\to \infty}\bigg\langle  {\bf\large S}_n\big({\bf \large R}\big)u_k\;\big|\;u_k\bigg\rangle=0\\&\Rightarrow& \bigg(1-\sum_{1\leq j\leq d}|\mu_j|^2\bigg)^m\sum_{0\leq j\leq n}(-1)^j\binom{n}{j}
\lim_{k\to \infty }\bigg\langle \big(R_1+\cdots+R_d\big)^{n-j}u_k\big|\;\big(R_1+\cdots+R_d\big)^ju_k\bigg\rangle=0\\&\Rightarrow&
 \bigg(1-\sum_{1\leq j\leq d}|\mu_j|^2\bigg)^m\bigg( \mu_1+\cdots+\mu_d-\overline{\mu_1}-\cdots-\overline{\mu_d}\bigg)^n=0
 \\&\Rightarrow&
 \bigg(1-\sum_{1\leq j\leq d}|\mu_j|^2\bigg)^m\bigg( \mu_1+\cdots+\mu_d-\overline{\mu_1+\cdots+\mu_d}\bigg)^n=0
 \\&\Rightarrow&
 \bigg(1-\sum_{1\leq j\leq d}|\mu_j|^2\bigg)^m\bigg(2iIm\big( \mu_1+\cdots+\mu_d\big)\bigg)^n=0
 \\&\Rightarrow& \bigg(1-\sum_{1\leq j\leq d}|\mu_j|^2\bigg)^m=0\;\;\hbox{or}\;\bigg(2iIm\big( \mu_1+\cdots+\mu_d\big)\bigg)^n=0.
\end{eqnarray*}
This shows that   $\sigma_{ja}({\bf\large R}) \subset \partial\mathbb{B}(\mathbb{C}^d) \bigcup \bigg\{  (\mu_1,\cdots,\mu_d) \in \mathbb{C}^d\;\;/ \;\mu_1+\cdots+\mu_d\in \mathbb{R}\;\bigg\}.$\par \vskip 0.2 cm \noindent
$(2)$ Assume that  $\mu=(\mu_1,\cdots,\mu_d) \in \sigma_{jp}({\bf\large
R})$, then there exists  a nonzero vector $u$ for which  $\big(R_l - \mu_l\big)u = 0$ for  $l= 1,\cdots,d$. Since ${\bf\large R}$ is an $(m,n)$-isosymmetric multioperators, then it follows by a similar calculation
 as in the proof of statement $(1)$  that
$$\bigg(1-\sum_{1\leq j\leq d}|\mu_j|^2\bigg)^m\bigg(2iIm\big( \mu_1+\cdots+\mu_d\big)\bigg)^n=0,$$
and so that
$$\bigg(1-\sum_{1\leq j\leq d}|\mu_j|^2\bigg)^m=0\;\;\hbox{or}\;\bigg(2iIm\big( \mu_1+\cdots+\mu_d\big)\bigg)^n=0.$$
This justifies the statement $(2).$ \par \vskip 0.2 cm \noindent
$(3)$ Assume that $\{u_k\}_k$ and  $\{v_k\}_k$ be two sequences  in ${\mathcal H}$ such that $\|u_k\|=\|v_k\|=1$,
$$\|\big(R_l-\mu_l) u_k\|\longrightarrow 0 \;\hbox{and} \; \|\big(R_l-\mu_j^\prime\big)v_k\| \longrightarrow 0\;\;(\hbox{as}\;\;
k \longrightarrow \infty) \;\;\hbox{for }\;\;l=1,\cdots,d.$$ We have  $\displaystyle\lim_{k\rightarrow \infty}\big(R_l^{\gamma_l}-\mu_l^{\mu_l}\big)u_k=0$   and  $\displaystyle\lim_{k\rightarrow \infty}\big(R_l^{\gamma_l}-\mu_l^{\prime \gamma_l})v_k=0$,
which implies
$$\displaystyle\lim_{k\rightarrow \infty}\big({\bf\large R}^{\gamma}-\mu^{\gamma}\big)u_k=0\;\;\text{ and}\;\;\displaystyle\lim_{k\rightarrow \infty}({\bf\large R}^{\gamma}-\mu^{\prime \gamma})v_k=0.$$
Since ${\bf \large R}$ is an $(m,n)$-isosymmetric multioperators, it thus follows that
\begin{eqnarray*}
0&=&\lim_{k\rightarrow\infty}\bigg\langle \big(\sum_{0\leq j \leq m}(-1)^{m-j}\binom{m}{j}\sum_{|\gamma|=j}\frac{j!}{\gamma!}{\bf \large R}^{\ast \gamma}{\bf\large S}_n\big({\bf \large R}\big){\bf \large R}^\gamma \big)u_k \; \big|\;v_k\bigg\rangle\\&=&
\sum_{0\leq j \leq m}(-1)^{m-j}\binom{m}{j}\sum_{|\gamma|=j}\frac{k!}{\gamma!}\lim_{k\rightarrow\infty}\bigg\langle {\bf\large S}_n\big({\bf \large R}\big)\big({\bf \large R}^\gamma -{\mu}^\gamma+{\mu}^\gamma\big)u_k \; \big|\;\big({\bf \big(\large R}^{ \gamma}-\mu^{\prime \gamma}+\mu^{\prime \gamma}\big)v_k\bigg\rangle\\&=&
\sum_{0\leq j \leq m}(-1)^{m-j}\binom{m}{j}\sum_{|\gamma|=j}\frac{k!}{\gamma!}\mu^\gamma( \overline{\mu^\prime})^\gamma
\lim_{k\rightarrow\infty}\bigg\langle   {\bf\large S}_n\big({\bf \large R}\big)u_k \; |\;v_k\bigg\rangle\\&=&
\bigg(1-\mu_1.\overline{\mu_1^\prime}-\cdots-\mu_d.\overline{\mu_d^\prime}\bigg)^m\lim_{k\rightarrow\infty}\sum_{0\leq j\leq n}(-1)^j\binom{n}{j}\bigg\langle \big(R_1+\cdots+R_d\big)^{n-j}u_k\;\big|\; \big(R_1+\cdots+R_d\big)^j v_k\bigg\rangle
\\&=&\bigg(1-\mu_1.\overline{\mu_1^\prime}-\cdots-\mu_d.\overline{\mu_d^\prime}\bigg)^m\bigg(\mu_1+\cdots +\mu_d-\overline{\mu_1^\prime} -\cdots-\overline{\mu_d^\prime} \bigg)^n\lim_{k\rightarrow\infty}\big\langle u_k\;\big|\;v_k\big\rangle
\\&=& \bigg(1-\displaystyle\sum_{1\leq j\leq d}\mu_j\overline{\mu_j^\prime}\bigg)^m\bigg(\displaystyle\sum_{1\leq j\leq d}\big(\mu_j-\overline{\mu_j^\prime}\big)\bigg)^n\lim_{k\to \infty}\big\langle u_k\;\big|\;v_k\big\rangle,
\end{eqnarray*}
and this, since $1-\displaystyle\sum_{1\leq j\leq d}\mu_j\overline{\mu_j^\prime}\not=0\;\;\hbox{and}\;\;\displaystyle\sum_{1\leq j\leq d}\big(\mu_j-\overline{\mu_j^\prime}\big)\not=0$, implies $\displaystyle\lim_{k\to \infty}\big\langle u_k\;\big|\;v_k\big\rangle=0$ as claimed.
\noindent The proof of the statement $(4)$ follows from an argument similar to that used in $(3)$. This ends the proof.
\end{proof}
	\section{Acknowledgments}
	The authors extend their appreciation to the Deanship of Scientific Research at King Khalid University for funding this work through Small Research Project grant number (G.R.P.1/151/43).
\par \vskip 0.2 cm \noindent {\bf Data Availability}\par \vskip 0.2 cm \noindent
Data sharing is not applicable to this study as no data sets
were generated or analyzed during the current study. \par \vskip 0.2 cm \noindent
{\bf Conflicts of Interest}\par \vskip 0.2 cm \noindent
The authors declare that they have no conflicts of interest.




\end{document}